\documentclass[12pt,fleqn,draft]{article} 
\usepackage{amsmath,amssymb,amsthm,amsfonts,bm}
\usepackage{enumerate}
\usepackage{indentfirst}
\usepackage{cite}
\topmargin=-1cm
\oddsidemargin=0cm
\pagestyle{plain}
\textwidth=16cm
\textheight=21.2cm

\usepackage[usenames,dvipsnames]{color}

\makeatletter
\def\@cite#1#2{[{{\bfseries #1}\if@tempswa , #2\fi}]}
\renewcommand{\section}{%
\@startsection{section}{1}{\z@}
{0.5truecm plus -1ex minus -.2ex}%
{1.0ex plus .2ex}{\bfseries\large}}
\def\@seccntformat#1{\csname the#1\endcsname.\ }
\makeatother

\setlength\arraycolsep{2pt}

\numberwithin{equation}{section} 
\pagestyle{plain}
\theoremstyle{theorem}
\newtheorem{thm}{Theorem}[section]

\newtheorem{lem}[thm]{Lemma}

\theoremstyle{definition}
\newtheorem{df}{Definition}[section]
\newtheorem{remark}{Remark}[section]

\newtheorem*{prth1.1}{Proof of Theorem 1.1}
\newtheorem*{prth2.1}{Proof of Theorem 2.1}
\newtheorem*{prth2.2}{Proof of Theorem 2.2}

\let\widehat\widehat

\def\Pi{\widehat\pi}

\begin{document}
\footnote[0]
    {2010 {\it Mathematics Subject Classification}\/: 
    35G30, 80A22, 35A40.      
    }
\footnote[0] 
    {{\it Key words and phrases}\/: 
    nonlocal Penrose--Fife type phase field systems; inertial terms; 
    existence; approximation and time discretization.        
} 
\begin{center}
    \Large{{\bf Existence for a nonlocal Penrose--Fife type \\ 
                    phase field system with inertial term}}
\end{center}
\vspace{5pt}
\begin{center}
    Shunsuke Kurima%
    \\
    \vspace{2pt}
    Department of Mathematics, 
    Tokyo University of Science\\
    1-3, Kagurazaka, Shinjuku-ku, Tokyo 162-8601, Japan\\
    {\tt shunsuke.kurima@gmail.com}\\
    \vspace{2pt}
\end{center}
\begin{center}    
    \small \today
\end{center}

\vspace{2pt}
\newenvironment{summary}
{\vspace{.5\baselineskip}\begin{list}{}{%
     \setlength{\baselineskip}{0.85\baselineskip}
     \setlength{\topsep}{0pt}
     \setlength{\leftmargin}{12mm}
     \setlength{\rightmargin}{12mm}
     \setlength{\listparindent}{0mm}
     \setlength{\itemindent}{\listparindent}
     \setlength{\parsep}{0pt}
     \item\relax}}{\end{list}\vspace{.5\baselineskip}}
\begin{summary}
{\footnotesize {\bf Abstract.} 
This article deals with a nonlocal Penrose-Fife type phase field system with inertial term. 
We do not know whether we can prove existence of solutions in reference to 
Colli--Grasselli--Ito [Electron. J. Differential Equations 2002, No. 100, 32 pp.] or not 
(see Remark \ref{remaboutapp}). 
In this paper we introduce a time discretization scheme (see Section \ref{Sec2}), 
pass to the limit as the time step $h$ goes to $0$  
and obtain an error estimate 
for the difference between continuous and discrete solutions 
(see Section \ref{Sec5}). 
}
\end{summary}
\vspace{10pt}

\newpage

\section{Introduction}\label{Sec1}

Colli--Grasselli--Ito \cite{CGI2002} 
have derived existence of solutions to   
the parabolic hyperbolic Penrose--Fife phase field system  
\begin{equation}\label{P1}\tag{P1}
\begin{cases}
\left(-\frac{1}{u}\right)_{t} + (\lambda(\varphi))_{t} - \Delta u = f  
&\mbox{in}\ \Omega \times (0, T), 
\\[1mm] 
\varphi_{tt} + \varphi_t - \Delta\varphi 
                                           + \beta(\varphi) + \pi(\varphi) = \lambda'(\varphi)u
&\mbox{in}\ \Omega \times (0, T), 
\\[1mm]
\partial_\nu u + u = g 
&\mbox{on}\ \partial\Omega \times (0, T), 
\\[1mm] 
\left(-\frac{1}{u}\right)(0) = -\frac{1}{u_{0}},\ 
\varphi(0) = \varphi_0,\ \varphi_{t}(0) = v_0 
&\mbox{in}\ \Omega,      
\end{cases}
\end{equation}
where $\Omega \subset \mathbb{R}^d$ ($d= 1, 2, 3$) is a bounded domain 
with smooth boundary $\partial\Omega$, 
$T > 0$, 
$\lambda : \mathbb{R}\to\mathbb{R}$ 
is a smooth function which may have quadratic growth, 
$\beta : \mathbb{R}\to\mathbb{R}$ is a maximal monotone function, 
$\pi : \mathbb{R}\to\mathbb{R}$ is an anti-monotone function, 
$\partial_\nu$ denotes differentiation with respect to
the outward normal of $\partial\Omega$, 
$u_{0} : \Omega \to \mathbb{R}$, 
$\varphi_{0} : \Omega \to \mathbb{R}$ and 
$v_{0} : \Omega \to \mathbb{R}$   
are given functions. 
Moreover, in the case that $\lambda(\varphi) = \varphi$, 
they have proved uniqueness of solutions to \eqref{P1}. 
Assuming that $|\beta(r)| \leq c_{1}|r|^3 + c_{2}$ for all $r \in \mathbb{R}$, 
where $c_{1}, c_{2} > 0$ are some constants, 
we can obtain an estimate for $\beta(\varphi)$ 
by establishing the $L^{\infty}(0, T; H^1(\Omega))$-estimate for $\varphi$ 
and by the continuity of the embedding $H^1(\Omega) \hookrightarrow L^6(\Omega)$.

\bigskip

Existence of solutions to the singular nonlocal phase field system with inertial term
\begin{equation}\label{P2}\tag{P2}
\begin{cases}
(\ln u)_{t} + \varphi_{t} - \Delta u = f  
&\mbox{in}\ \Omega \times (0, T), 
\\[1mm] 
\varphi_{tt} + \varphi_t + a(\cdot)\varphi - J\ast\varphi 
                                           + \beta(\varphi) + \pi(\varphi) = u
&\mbox{in}\ \Omega \times (0, T), 
\\[1mm]
\partial_\nu u = 0 
&\mbox{on}\ \partial\Omega \times (0, T), 
\\[1mm] 
(\ln u)(0) = \ln u_{0},\ 
\varphi(0) = \varphi_0,\ \varphi_{t}(0) = v_0 
&\mbox{in}\ \Omega     
\end{cases}
\end{equation}
has been studied (\cite{K8}), 
where $J : \mathbb{R}^d \to \mathbb{R}$ is an interaction kernel, 
$a(x) := \int_{\Omega} J(x-y)\,dy$ and  
$(J\ast\varphi)(x) := \int_{\Omega} J(x-y)\varphi(y)\,dy$ 
for $x \in \Omega$.  
To derive the $L^{\infty}(0, T; H^2(\Omega))$-estimate for 
$\int_{0}^{t} u(s)\,ds$
is a key to establish an estimate for $\beta(\varphi)$. 
Indeed, it holds that 
\[
\frac{1}{2}|\varphi(x, t)|^2 
= \frac{1}{2}|\varphi_{0}(x)|^2 
   + \int_{0}^{t}\varphi_{t}(x, s)\varphi(x, s)\,ds
\]
and 
\begin{align*}
&\frac{1}{2}|\varphi_{t}(x, t)|^2 + \int_{0}^{t}|\varphi_{t}(x, s)|^2\,ds 
+ \widehat{\beta}(\varphi(x, t)) 
\notag \\[2mm] 
&= \int_{0}^{t}u(x, s)\varphi_{t}(x, s)\,ds 
+ \frac{1}{2}|v_{0}(x)|^2 + \widehat{\beta}(\varphi_{0}(x)) 
\notag \\ 
&\,\quad - \int_{0}^{t}(a(x)\varphi(x, s) - (J\ast\varphi(s))(x))\varphi_{t}(x, s)\,ds,  
\end{align*}
where $\widehat{\beta}(r) = \int_{0}^{r}\beta(s)\,ds$. 
Moreover, since $u >0$, we see that 
\[
\int_{0}^{t}u(x, s)\varphi_{t}(x, s)\,ds  
\leq \|\varphi_{t}\|_{L^{\infty}(\Omega\times(0, T))}\int_{0}^{t}u(x, s)\,ds.   
\]
Thus,  deriving the $L^{\infty}(0, T; H^2(\Omega))$-estimate for 
$\int_{0}^{t}u(x, s)\,ds$ from the first equation in \eqref{P2}, 
using the continuity of 
the embedding $H^2(\Omega) \hookrightarrow L^{\infty}(\Omega)$, 
applying the Young inequality and the Gronwall lemma, 
we can establish the $L^{\infty}(\Omega\times(0, T))$-estimates 
for $\varphi_{t}$ and $\varphi$, 
whence we can obtain the $L^{\infty}(\Omega\times(0, T))$-estimate 
for $\beta(\varphi)$ by assuming that $\beta$ is continuous.

\bigskip

In this paper we deal with the 
nonlocal Penrose--Fife phase field system with inertial term
\begin{equation}\label{P}\tag{P}
\begin{cases}
\left(-\frac{1}{u}\right)_{t} + \varphi_{t} - \Delta u = f  
&\mbox{in}\ \Omega \times (0, T), 
\\[1mm] 
\varphi_{tt} + \varphi_t + a(\cdot)\varphi - J\ast\varphi 
                                           + \beta(\varphi) + \pi(\varphi) = u
&\mbox{in}\ \Omega \times (0, T), 
\\[1mm]
\partial_\nu u + u = g 
&\mbox{on}\ \partial\Omega \times (0, T), 
\\[1mm] 
\left(-\frac{1}{u}\right)(0) = -\frac{1}{u_{0}},\ 
\varphi(0) = \varphi_0,\ \varphi_{t}(0) = v_0 
&\mbox{in}\ \Omega,     
\end{cases}
\end{equation}
where $\Omega \subset \mathbb{R}^d$ ($d= 1, 2, 3$) is a bounded domain 
with smooth boundary $\partial\Omega$. 
Moreover,  
we assume the four conditions:  
\begin{enumerate} 
\setlength{\itemsep}{0mm}
\item[{\bf A1}] 
$J(-x) = J(x)$ for all $x \in \mathbb{R}^d$ 
and $\displaystyle\sup_{x \in \Omega} \int_{\Omega} |J(x-y)|\,dy < + \infty$. 
\item[{\bf A2}] $\beta : \mathbb{R} \to \mathbb{R}$                                
is a single-valued maximal monotone function 
such that 
there exists a proper lower semicontinuous convex function 
$\widehat{\beta} : \mathbb{R} \to [0, +\infty)$ 
satisfying that   
$\widehat{\beta}(0) = 0$ and 
$\beta = \partial\widehat{\beta}$, 
where $\partial\widehat{\beta}$  
is the subdifferential of $\widehat{\beta}$. 
Moreover, $\beta : \mathbb{R} \to \mathbb{R}$ is local Lipschitz continuous. 
\item[{\bf A3}] $\pi : \mathbb{R} \to \mathbb{R}$ is a Lipschitz continuous function. 
\item[{\bf A4}] $f \in L^2(\Omega\times(0, T))$, 
$g \in L^2(0, T; H^{1/2}(\partial\Omega))$, 
$g \leq 0$ a.e.\ on $\partial\Omega\times(0, T)$,   
$\theta_0 := -\frac{1}{u_{0}} \in L^{2}(\Omega)$, 
$\theta_{0} > 0$ a.e.\ in $\Omega$, 
$\ln \theta_{0} \in L^1(\Omega)$,    
$\varphi_0, v_0 \in L^{\infty}(\Omega)$. 
\end{enumerate}

\medskip

We define weak solutions of \eqref{P} as follows.
%
%
%
 \begin{df}         
 A pair $(u, \varphi)$ with 
    \begin{align*}
    &u \in L^2(0, T; H^1(\Omega)),\ 
      - \frac{1}{u} \in H^1(0, T; {(H^1(\Omega))}^{*}) \cap L^{\infty}(0, T; L^2(\Omega)), \\ 
    &\varphi \in W^{2, 2}(0, T; L^2(\Omega)) \cap W^{1, \infty}(0, T; L^{\infty}(\Omega)) 
    \end{align*}
 is called a {\it weak solution} of \eqref{P} 
 if $(u, \varphi)$ satisfies 
    \begin{align*}
        &\left\langle 
          \left(- \frac{1}{u} \right)_{t}, w 
          \right\rangle_{({H^1(\Omega))}^{*}, H^1(\Omega)}
           + (\varphi_t, w)_{L^2(\Omega)} + \int_{\Omega} \nabla u \cdot \nabla w 
           + \int_{\partial\Omega} (u - g)w \notag \\[2mm] 
       &\hspace{60mm} = (f, w)_{L^2(\Omega)} 
                 \quad  \mbox{a.e.\ in}\ (0, T) 
                           \ \  \mbox{for all}\ w \in H^1(\Omega),  
     \\[5mm]
        &\varphi_{tt} + \varphi_t +a(\cdot)\varphi - J\ast\varphi 
                                                             + \beta(\varphi) + \pi(\varphi) = u 
                               \quad \mbox{a.e.\ in}\ \Omega\times(0, T), 
     \\[2mm]
        &\left(- \frac{1}{u} \right)(0) = \theta_0,\ 
          \varphi(0) = \varphi_0,\ \varphi_{t}(0) = v_0 
                                                       \quad \mbox{a.e.\ in}\ \Omega. 
     \end{align*}
 \end{df}

\medskip

The following theorem asserts existence and uniqueness of weak solutions to \eqref{P}.   
\begin{thm}\label{maintheorem1}
Assume that {\rm {\bf A1}-{\bf A4}} hold. 
Then there exists a unique weak solution $(u, \varphi)$ of \eqref{P}. 
\end{thm}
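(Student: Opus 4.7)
The plan is to establish existence via a time discretization (Rothe) scheme, as announced in the abstract, and to prove uniqueness separately by energy arguments exploiting the monotonicity of $u \mapsto -\tfrac{1}{u}$ on $(0,\infty)$.

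For existence, I would fix $N\in\mathbb{N}$, set $h = T/N$, $t_n = nh$, and formulate a discrete system that computes $(u_n,\varphi_n,v_n)$ from $(u_{n-1},\varphi_{n-1},v_{n-1})$ by approximating $(-\tfrac{1}{u})_t$ by a difference quotient, $\varphi_t$ by $(\varphi_n-\varphi_{n-1})/h$, and $\varphi_{tt}$ by $(v_n-v_{n-1})/h$ with $v_n = (\varphi_n-\varphi_{n-1})/h$. At each step, the discrete problem becomes a coupled elliptic system; its solvability would follow from a fixed point argument combined with the theory of maximal monotone operators applied to the graph $\beta$, using the Robin boundary condition and the sign assumption $g \le 0$ to guarantee positivity of the discrete $u_n$ (so that $-\tfrac{1}{u_n}$ stays well defined).

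The heart of the argument is the derivation of a priori estimates uniform in $h$. Testing the discrete first equation by $u_n$ (and using $g\le 0$ together with the convexity of $s\mapsto s\ln s$ or equivalently the primitive of $-\tfrac{1}{u}$) yields an $L^2(0,T;H^1(\Omega))$-estimate for the interpolants of $u$ and an $L^\infty(0,T;L^1(\Omega))$-estimate for $\ln u$, giving in particular control of $(-\tfrac{1}{u})$ in $L^\infty(0,T;L^2(\Omega))$. Testing the discrete second equation by $v_n$ produces an energy identity in which, as in the discussion of \eqref{P2} in the introduction, the critical term is $\int_0^t u\,\varphi_t$. I would control it by deriving an $L^\infty(0,T;H^2(\Omega))$-estimate for $\int_0^\cdot u(s)\,ds$ from the first equation (treating it as an elliptic equation with right-hand side in $L^\infty(0,T;L^2(\Omega))$), and then invoking the embedding $H^2(\Omega)\hookrightarrow L^\infty(\Omega)$ together with the Young inequality and a discrete Gronwall lemma. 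This produces $L^\infty$-bounds on $\varphi$ and $\varphi_t$, and then the local Lipschitz continuity of $\beta$ in A2 yields an $L^\infty$-bound for $\beta(\varphi)$; together with Lipschitz continuity of $\pi$ and boundedness of the convolution operator from A1, this bounds $\varphi_{tt}$ in $L^\infty(0,T;L^\infty(\Omega))\subset L^2(\Omega\times(0,T))$.

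Having these estimates, I would pass to the limit $h\to 0$ by Aubin--Lions compactness: piecewise-linear/piecewise-constant interpolants converge weakly in the energy spaces and strongly in $L^2$, which is enough to identify the nonlinear terms $-\tfrac{1}{u}$, $\beta(\varphi)$, $\pi(\varphi)$, and the convolution term in the limit. For uniqueness, given two solutions I would subtract the equations, test the first difference by $u_1-u_2$ and use the monotonicity of $u\mapsto-\tfrac{1}{u}$ on $\{u>0\}$, test the second difference by $\varphi_{1,t}-\varphi_{2,t}$, and apply Gronwall using the local Lipschitz bound on $\beta$ on the (bounded) range of $\varphi$. The main obstacle I anticipate is the interplay between positivity of the approximate $u_n$ and the $H^2$-regularity of $\int_0^\cdot u(s)\,ds$, since both are needed simultaneously to close the estimate on $\beta(\varphi)$; dealing with the singular term $-\tfrac{1}{u}$ at the discrete level while keeping estimates independent of $h$ is the technically delicate point.
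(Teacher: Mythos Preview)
Your existence outline via time discretization, discrete solvability by a fixed-point argument, and the hierarchy of a priori estimates (energy test, $H^2$-bound on $1\star u$, then $L^\infty$-bounds on $\varphi,\varphi_t,\beta(\varphi)$) is essentially the paper's approach. Two corrections and one genuine gap, however.

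First, a sign issue: in this paper $\theta_0=-\tfrac{1}{u_0}>0$, so $u<0$ throughout; the discrete solutions satisfy $u_{n+1}<0$ and the monotone map is $\alpha(r)=-\tfrac{1}{r}$ on $\{r<0\}$. Accordingly the first energy test in the paper is not against $u_{n+1}$ alone but against $1+u_{n+1}$, which is what produces simultaneously the $L^1$-bounds on $\theta$ and on $|\ln\theta|$; the $L^\infty(0,T;L^2)$-bound on $\theta$ then comes from a \emph{separate} test of the first equation by $\theta_{n+1}$, not from the $\ln$-estimate.

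The substantive gap is in your limit passage and uniqueness. Because the problem is nonlocal (there is no $-\Delta\varphi$, only $a\varphi-J\ast\varphi$), the approximations $\varphi_h$ carry \emph{no spatial regularity} beyond $L^\infty(\Omega)$; Aubin--Lions therefore cannot yield strong $L^2$-convergence of $\varphi_h$, which you need to identify $\beta(\varphi)$. The paper circumvents this by proving a Cauchy-in-$h$ stability estimate (comparing solutions with steps $h$ and $\tau$), and the very same estimate, applied to two continuous solutions, gives uniqueness. Your proposed uniqueness test does not close: pairing the first-equation difference with $u_1-u_2$ produces $\langle\partial_t(\theta_1-\theta_2),u_1-u_2\rangle_{(H^1)^*,H^1}$, which is neither signed nor the time derivative of a controlled quantity (and one cannot integrate by parts since $u_t$ is not available). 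The remedy, which is the paper's device, is to \emph{integrate the first equation over $(0,t)$ before testing}: one obtains
\[
(\theta_1-\theta_2)(t)+(\varphi_1-\varphi_2)(t)-\Delta\bigl(1\star(u_1-u_2)\bigr)(t)=0,
\]
and testing this by $u_1(t)-u_2(t)$ now gives the good term $\int_0^t(\theta_1-\theta_2,u_1-u_2)\,ds\ge 0$ by monotonicity of $\alpha$, together with $\tfrac12\frac{d}{dt}\|1\star(u_1-u_2)\|_{H^1}^2$. Likewise, the second equation is integrated in time and $\|v_1-v_2\|_{L^2}$ is estimated directly (using the local Lipschitz bound on $\beta$ over the uniform $L^\infty$-range of $\varphi$), rather than tested by $\varphi_{1,t}-\varphi_{2,t}$. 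With this modification your scheme matches the paper and closes.
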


\begin{remark}\label{remaboutapp}
Even if in reference to \cite{CGI2002} we consider the approximation 
\begin{equation*}\tag*{(P)$_{N}$}\label{PN}
\begin{cases}
(\mu_{N}u_{N} + \rho_{N}(u_{N}))_{t} + (\varphi_{N})_{t} - \Delta u_{N} = f  
&\mbox{in}\ \Omega \times (0, T), 
\\[1mm] 
(\varphi_{N})_{tt} + (\varphi_{N})_t + a(\cdot)\varphi_{N} - J\ast\varphi_{N} 
\\ 
\hspace{30mm} +\ \beta_{N}(\varphi_{N}) + \pi(\varphi_{N}) = - (\rho_{N}(u_{N}))^{-1}
&\mbox{in}\ \Omega \times (0, T), 
\\[2.5mm]
\partial_\nu u_{N} + u_{N} = g 
&\mbox{on}\ \partial\Omega \times (0, T), 
\\[1mm] 
(u_{N})(0) = - (\rho_{N}(u_{0}))^{-1},\ 
\varphi_{N}(0) = \varphi_0,\ (\varphi_{N})_{t}(0) = v_0 
&\mbox{in}\ \Omega,     
\end{cases}
\end{equation*}
we do not know whether we can establish a priori estimates for \ref{PN} or not. 
Here $N \in \mathbb{N}$, 
$\mu_{N} := \frac{1}{1 + N^2}$, 
the function $\rho_{N} : \mathbb{R} \to \mathbb{R}$ is defined by 
\[
\rho_{N}(r) := 
\begin{cases}
\frac{1}{N + 1} &\mbox{if}\ r < - (N + 1), \\[1mm] 
- \frac{1}{r}     &\mbox{if}\ - (N + 1) \leq r \leq - \frac{1}{N + 1}, \\[1mm] 
N + 1             &\mbox{if}\ - \frac{1}{N + 1} < r,  
\end{cases}
\]
and the function $\beta_{N} : \mathbb{R} \to \mathbb{R}$ is defined by 
\[
\beta_{N}(r) := 
\begin{cases}
- N   &\mbox{if}\ \beta(r) \leq -N, \\[1mm] 
\beta(r)  &\mbox{if}\ -N < \beta(r) < N, \\[1mm]  
N  &\mbox{if}\ N \leq \beta(r).  
\end{cases}
\]
Although we can obtain that   
\[
\frac{1}{2}|\varphi_{N}(x, t)|^2 
= \frac{1}{2}|\varphi_{0}(x)|^2 
   + \int_{0}^{t}(\varphi_{N})_{t}(x, s)\varphi_{N}(x, s)\,ds
\] 
and 
\begin{align*}
&\frac{1}{2}|(\varphi_{N})_{t}(x, t)|^2 
+ \int_{0}^{t} |(\varphi_{N})_{t}(x, s)|^2\,ds 
+ \widehat{\beta}_{N}(\varphi_{N}(x, t)) 
\notag \\
&= \int_{0}^{t}(\rho_{N}(u_{N}(x, s)))^{-1}(-(\varphi_{N})_{t}(x, s))\,ds + \cdots,    
\end{align*}
where $\widehat{\beta}_{N}(r) = \int_{0}^{r}\beta_{N}(s)\,ds$, 
we do not know whether 
the $L^{\infty}(\Omega\times(0, T))$-estimate for 
$\left\{\int_{0}^{t}(\rho_{N}(u_{N}(x, s)))^{-1}\,ds\right\}_{N}$ 
can be derived or not, 
and then we do not know whether 
the $L^{\infty}(\Omega\times(0, T))$-estimates for 
$\{(\varphi_{N})_{t}\}_{N}$, $\{\varphi_{N}\}_{N}$ and $\{\beta(\varphi_{N})\}_{N}$ 
can be obtained or not. 
Even if we replace $- (\rho_{N}(u_{N}))^{-1}$ 
with $u_{N}$ in \ref{PN}, 
since the inequality $-u_{N} \geq 0$ does not hold, 
we see that 
\[
\int_{0}^{t}(-u_{N}(x, s))(-(\varphi_{N})_{t}(x, s))\,ds 
\nleq \|-(\varphi_{N})_{t}\|_{L^{\infty}(\Omega\times(0, T))}
\int_{0}^{t}(-u_{N}(x, s))\,ds,  
\]
whence we do not know whether 
the $L^{\infty}(\Omega\times(0, T))$-estimates for 
$\{(\varphi_{N})_{t}\}_{N}$, $\{\varphi_{N}\}_{N}$ and $\{\beta(\varphi_{N})\}_{N}$ 
can be established or not. 
\end{remark}

\bigskip

This paper is organized as follows. 
In Section \ref{Sec2} we introduce a time discretization of \eqref{P} 
and set precisely the approximate problem. 
In Section \ref{Sec3} 
we prove existence for the discrete problem. 
In Section \ref{Sec4}
we establish uniform estimates for the approximate problem.  
Section \ref{Sec5} obtains 
Cauchy's criterion for solutions of the approximate problem
and 
is devoted to the proofs of 
existence and uniqueness of weak solutions to \eqref{P} 
and an error estimate between solutions of \eqref{P} and solutions of the approximate 
problem.

\vspace{10pt}

\section{Time discretization}\label{Sec2}

To prove existence of weak solutions to \eqref{P}  
we deal with the discrete problem 
\begin{equation*}\tag*{(P)$_{n}$}\label{Pn}
     \begin{cases}
         \frac{\theta_{n+1} - \theta_{n}}{h} + \frac{\varphi_{n+1}-\varphi_{n}}{h}
         - \Delta u_{n+1} = f_{n+1}   
         & \mbox{in}\ \Omega, 
 \\[2mm]
         z_{n+1} + v_{n+1} + a(\cdot)\varphi_{n} - J\ast\varphi_{n} 
         + \beta(\varphi_{n+1}) + \pi(\varphi_{n+1}) 
         = u_{n+1} 
         & \mbox{in}\ \Omega, 
 \\[1mm]
         z_{n+1} = \frac{v_{n+1}-v_{n}}{h},\ v_{n+1} = \frac{\varphi_{n+1}-\varphi_{n}}{h} 
         & \mbox{in}\ \Omega, 
 \\[1mm]
         \partial_{\nu}u_{n+1} + u_{n+1} = g_{n+1}                                   
         & \mbox{on}\ \partial\Omega 
     \end{cases}
\end{equation*}
for $n=0, ... , N-1$, where $h=\frac{T}{N}$, $N \in \mathbb{N}$, 
\begin{align*}
&\theta_{j} := -\frac{1}{u_{j}}
\end{align*}
for $j=0, 1, ..., N$, and   
$f_{k} := \frac{1}{h}\int_{(k-1)h}^{kh} f(s)\,ds$, 
$g_{k} := \frac{1}{h}\int_{(k-1)h}^{kh} g(s)\,ds$  
for $k = 1, ... , N$. 
Indeed, we can show existence for \ref{Pn}.   
\begin{thm}\label{maintheorem2}
Assume that {\rm {\bf A1}-{\bf A4}} hold. 
Then there exists $h_{0} \in (0, 1]$  
such that for all $h \in (0, h_{0})$ 
there exists a unique solution of {\rm \ref{Pn}} satisfying 
\[
u_{n+1} \in H^2(\Omega),\ \varphi_{n+1} \in L^{\infty}(\Omega) 
\quad \mbox{for}\ n = 0, ..., N-1. 
\]
\end{thm}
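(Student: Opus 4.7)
The plan is to proceed by induction on $n$, reducing each step of \ref{Pn} to a single scalar nonlinear elliptic equation for $u_{n+1}$ alone. The decisive observation is that the second equation of \ref{Pn} is pointwise in $\varphi_{n+1}$ (the nonlocal convolution involves only $\varphi_n$), so for each prescribed $u_{n+1}$ the unknown $\varphi_{n+1}$ is recovered by inverting an algebraic relation on $\mathbb{R}$.

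Assume inductively that $\theta_n\in L^2(\Omega)$ and $\varphi_n,v_n\in L^{\infty}(\Omega)$ have already been constructed (the case $n=0$ is the datum). Substituting $v_{n+1}=(\varphi_{n+1}-\varphi_n)/h$ and $z_{n+1}=(v_{n+1}-v_n)/h$ into the second equation of \ref{Pn} and multiplying by $\lambda:=h^2/(1+h)$ yields the pointwise relation
\[
\varphi_{n+1}(x)+\lambda\bigl(\beta+\pi\bigr)(\varphi_{n+1}(x))=R_n(x)+\lambda\,u_{n+1}(x),
\]
where $R_n\in L^{\infty}(\Omega)$ is determined entirely by $\varphi_n$, $v_n$ and $J\ast\varphi_n$. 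Because $\beta$ is maximal monotone and $\pi$ is Lipschitz with some constant $L_\pi$, the map $y\mapsto y+\lambda(\beta+\pi)(y)$ is strongly monotone with modulus at least $1-\lambda L_\pi\ge 1/2$ as soon as $h$ is small enough that $\lambda L_\pi\le 1/2$; its inverse $F_n(x,\cdot)$ is then Lipschitz in its second argument with constant of order $h^2$, and the local Lipschitzness of $\beta$ in \textbf{A2} ensures that $F_n$ sends $L^{\infty}$ data to $L^{\infty}$ output. Setting $\varphi_{n+1}=F_n(\cdot,u_{n+1})$, plugging into the first equation of \ref{Pn} and multiplying by $h$ collapses the whole system into the scalar problem
\[
-h\Delta u_{n+1}-\frac{1}{u_{n+1}}+F_n(\cdot,u_{n+1})=H_n\ \text{in }\Omega,\qquad
\partial_\nu u_{n+1}+u_{n+1}=g_{n+1}\ \text{on }\partial\Omega,
\]
with $H_n:=hf_{n+1}+\theta_n+\varphi_n\in L^2(\Omega)$.

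I would attack this reduced problem by the Banach contraction principle on $L^2(\Omega)$. The Robin Laplacian $\mathcal{A}u:=-h\Delta u$ with homogeneous datum $\partial_\nu u+u=0$ is the $L^2$-subdifferential of the coercive functional $u\mapsto (h/2)\int_\Omega|\nabla u|^2+(h/2)\int_{\partial\Omega}|u|^2$, while $\mathcal{G}u:=-1/u$ is the $L^2$-subdifferential of $\mathcal{J}(u):=\int_\Omega(-\ln u)\,dx$ (extended by $+\infty$ off the cone of a.e.\ positive functions with $\ln u\in L^1$). Consequently $\mathcal{A}+\mathcal{G}$ is maximal monotone on $L^2(\Omega)$ and its effective domain automatically enforces $u>0$ a.e. For fixed $v\in L^2(\Omega)$, let $\mathcal{T}v\in H^1(\Omega)$ be the unique solution of
\[
\mathcal{A}u+\mathcal{G}u+F_n(\cdot,v)=H_n+h\ell_{g_{n+1}},
\]
where $\ell_{g_{n+1}}\in(H^1(\Omega))^{*}$ is the boundary functional $w\mapsto\int_{\partial\Omega}g_{n+1}w$, well defined because $g_{n+1}\in H^{1/2}(\partial\Omega)$. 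Testing the difference of the equations for $\mathcal{T}v_1$ and $\mathcal{T}v_2$ against $\mathcal{T}v_1-\mathcal{T}v_2$, discarding the nonnegative contribution of $\mathcal{G}$, and invoking the standard equivalence $\|w\|_{H^1}^2\le C(\|\nabla w\|_{L^2}^2+\|w\|_{L^2(\partial\Omega)}^2)$ gives
\[
\frac{h}{C}\,\|\mathcal{T}v_1-\mathcal{T}v_2\|_{L^2}\le L_h\,\|v_1-v_2\|_{L^2},\qquad L_h=O(h^2),
\]
so $\mathcal{T}$ is a strict contraction for every $h<h_0$ with $h_0\in(0,1]$ sufficiently small, and its unique fixed point provides the required $u_{n+1}\in H^1(\Omega)$ with $1/u_{n+1}\in L^2(\Omega)$.

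Uniqueness for \ref{Pn} is immediate from the same estimate. For regularity the equation rewrites as $-\Delta u_{n+1}=h^{-1}\bigl(H_n+1/u_{n+1}-F_n(\cdot,u_{n+1})\bigr)\in L^2(\Omega)$ with Robin datum $g_{n+1}\in H^{1/2}(\partial\Omega)$, so classical elliptic theory on the smooth domain $\Omega$ yields $u_{n+1}\in H^2(\Omega)$; Sobolev embedding in $d\le 3$ then gives $u_{n+1}\in L^{\infty}(\Omega)$, whence $\varphi_{n+1}=F_n(\cdot,u_{n+1})\in L^{\infty}(\Omega)$ and $v_{n+1}=(\varphi_{n+1}-\varphi_n)/h\in L^{\infty}(\Omega)$ close the induction. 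The main obstacle is the functional-analytic handling of the singular term $-1/u$: once $\mathcal{A}+\mathcal{G}$ is identified as a maximal monotone operator on $L^2$ whose resolvent cooperates with Lipschitz perturbations, everything hinges on the favourable scaling $L_h\sim h^2$ against the $h^{-1}$ cost of inverting $h\mathcal{A}$, which is precisely what makes the contraction work uniformly in $h$ for all $h$ sufficiently small.
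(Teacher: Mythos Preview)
Your plan is essentially the paper's: split each step of \ref{Pn} into the elliptic subproblem for $u_{n+1}$ (the paper's Lemma~\ref{elliptic1}) and the pointwise algebraic subproblem for $\varphi_{n+1}$ (Lemma~\ref{elliptic2}), then run a Banach contraction with constant of order $h$. The only structural difference is that you iterate on $u$ whereas the paper iterates on $\varphi$ via $B=\Psi\circ\Phi$; this is cosmetic.

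Two points need repair. First, the sign: since $\theta_0=-1/u_0>0$ by \textbf{A4}, one has $u_0<0$, and the iterates must live in the cone $\{u<0\ \text{a.e.}\}$, not $\{u>0\}$; your potential $\mathcal{J}(u)=\int_\Omega(-\ln u)$ should be $\int_\Omega(-\ln(-u))$. Second, and more substantively, the sentence ``Consequently $\mathcal{A}+\mathcal{G}$ is maximal monotone'' is not justified by the subdifferential sum rule you implicitly invoke: that rule requires $\operatorname{dom}\phi_1\cap\operatorname{int}(\operatorname{dom}\phi_2)\neq\emptyset$, and the cone $\{u<0\ \text{a.e.},\ \ln(-u)\in L^1\}$ has empty interior in $L^2(\Omega)$. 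The paper handles this via Barbu's perturbation criterion, verifying $(\mathcal{A}u,\mathcal{G}_\lambda u)_{L^2}\ge -C(\lambda\|\mathcal{G}_\lambda u\|_{L^2}^2+1)$ for the Yosida approximation $\mathcal{G}_\lambda$; it is precisely in this computation that the sign hypothesis $g_{n+1}\le 0$ from \textbf{A4} is used to control the boundary integral. Without this (or an equivalent argument) you do not yet know that $\mathcal{T}v$ exists, nor that the fixed point has $-1/u_{n+1}\in L^2(\Omega)$ so that elliptic regularity can upgrade it to $H^2(\Omega)$. Shifting $g_{n+1}$ to the right as a functional $\ell_{g_{n+1}}\in(H^1(\Omega))^*$ does not resolve this, since maximal monotonicity in $L^2$ only yields surjectivity onto $L^2$, not onto $L^2+(H^1)^*$.
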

Putting 
\begin{align}
&\widehat{\theta}_{h}(t) := \theta_{n} + \frac{\theta_{n+1} - \theta_{n}}{h}(t-nh), 
\label{hat1} 
\\[2mm]  
&\widehat{\varphi}_{h}(t) := \varphi_{n} + \frac{\varphi_{n+1}-\varphi_{n}}{h}(t-nh), 
\label{hat2} 
\\[2mm]  
&\widehat{v}_{h}(t) := v_{n} + \frac{v_{n+1}-v_{n}}{h}(t-nh)  
\label{hat3} 
\end{align}
for $t \in [nh, (n+1)h]$, $n = 0, ..., N-1$, 
and 
\begin{align}
&\overline{u}_{h}(t) := u_{n+1},\  
\overline{\theta}_{h} (t) := \theta_{n+1},\ 
\overline{\varphi}_{h} (t) := \varphi_{n+1},\ 
\underline{\varphi}_{h} (t) := \varphi_{n},\ \label{line1}   
\\[2mm]
&\overline{v}_{h} (t) := v_{n+1},\ 
\overline{z}_{h} (t) := z_{n+1},\ 
\overline{f}_{h}(t) := f_{n+1}  
\label{line2}   
\end{align}
for \ $t \in (nh, (n+1)h]$, $n=0, ..., N-1$, 
we can rewrite \ref{Pn} as  
\begin{equation*}\tag*{(P)$_{h}$}\label{Ph}
     \begin{cases}
          (\widehat{\theta}_{h})_{t} + (\widehat{\varphi}_{h})_{t} 
          - \Delta\overline{u}_{h} = \overline{f}_h    
         & \mbox{in}\ \Omega\times(0, T), 
 \\[2mm]
         \overline{z}_{h} + \overline{v}_{h} 
         + a(\cdot)\underline{\varphi}_{h} - J\ast\underline{\varphi}_{h} 
         + \beta(\overline{\varphi}_{h}) + \pi(\overline{\varphi}_{h}) 
         = \overline{u}_{h} 
         & \mbox{in}\ \Omega\times(0, T), 
\\[2mm]
         \overline{z}_{h} = (\widehat{v}_{h})_{t},\ \overline{v}_{h} = (\widehat{\varphi}_{h})_{t} 
         & \mbox{in}\ \Omega\times(0, T), 
 \\[2mm]
         \overline{\theta}_{h} = - \frac{1}{\overline{u}_{h}} 
         & \mbox{in}\ \Omega\times(0, T),  
 \\[2mm]
         \partial_{\nu}\overline{u}_{h} + \overline{u}_{h} = \overline{g}_{h}                                    
         & \mbox{on}\ \partial\Omega\times(0, T),
 \\[2mm]
        \widehat{\theta}_{h}(0) = \theta_{0},\ 
        \widehat{\varphi}_{h}(0) = \varphi_0,\    
        \widehat{v}_{h}(0) = v_0                                     
         & \mbox{in}\ \Omega.  
     \end{cases}
 \end{equation*}
Here we can check directly the following identities by \eqref{hat1}-\eqref{line2}:   
\begin{align}
&\|\widehat{\theta}_{h}\|_{L^{\infty}(0, T; L^2(\Omega))} 
= \max\{
   \|\theta_{0}\|_{L^2(\Omega)}, \|\overline{\theta}_{h}\|_{L^{\infty}(0, T; L^2(\Omega))}
   \}, 
\label{tool1} \\[1mm] 
&\|\widehat{\varphi}_{h}\|_{L^{\infty}(0, T; L^{\infty}(\Omega))} 
= \max\{ \|\varphi_{0}\|_{L^{\infty}(\Omega)}, 
                            \|\overline{\varphi}_{h}\|_{L^{\infty}(0, T; L^{\infty}(\Omega))} \},  
\label{tool2}  \\[1mm] 
&\|\widehat{v}_{h}\|_{L^{\infty}(0, T; L^{\infty}(\Omega))} 
= \max\{ \|v_{0}\|_{L^{\infty}(\Omega)}, 
                                     \|\overline{v}_{h}\|_{L^{\infty}(0, T; L^{\infty}(\Omega))} \},  
\label{tool3}  \\[1mm]  
&\|\overline{\theta}_{h} - \widehat{\theta}_{h}\|_{L^2(0, T; {(H^1(\Omega))}^*)}^2 
= \frac{h^2}{3}\|(\widehat{\theta}_{h})_{t}\|_{L^2(0, T; {(H^1(\Omega))}^*)}^2, 
\label{tool4}  \\[1mm] 
&\|\overline{\varphi}_{h} - \widehat{\varphi}_{h}\|_{L^{\infty}(0, T; L^{\infty}(\Omega))} 
= h\|(\widehat{\varphi}_{h})_{t}\|_{L^{\infty}(0, T; L^{\infty}(\Omega))} 
= h\|\overline{v}_{h}\|_{L^{\infty}(0, T; L^{\infty}(\Omega))}, 
\label{tool5} \\[1mm]
&\|\overline{v}_{h} - \widehat{v}_{h}\|_{L^2(0, T; L^2(\Omega))}^2 
= \frac{h^2}{3}\|(\widehat{v}_{h})_{t}\|_{L^2(0, T; L^2(\Omega))}^2 
= \frac{h^2}{3}\|\overline{z}_{h}\|_{L^2(0, T; L^2(\Omega))}^2,     
\label{tool6}  \\[1mm] 
&\underline{\varphi}_{h} = \overline{\varphi}_{h} - h(\widehat{\varphi}_{h})_{t}. 
\label{tool7}
\end{align}

\bigskip

\noindent 
We can prove Theorem \ref{maintheorem1} 
by passing to the limit in \ref{Ph} as $h \searrow 0$. 
Moreover, we can obtain the following theorem which asserts an error estimate 
between solutions of \eqref{P} and solutions of \ref{Ph}.

\medskip

\begin{thm}\label{maintheorem3} 
Let $h_{0}$ be as in Theorem \ref{maintheorem1}. 
Assume that {\rm {\bf A1}-{\bf A4}} hold. 
Assume further that 
$f \in W^{1,1}(0, T; L^2(\Omega))$ 
and $g \in W^{1,1}(0, T; L^2(\partial\Omega))$. 
Then there exist constants $h_{00} \in (0, h_{0})$ and $M>0$
depending on the data such that 
\begin{align*} 
\|1\star(\overline{u}_{h} - u)\|_{C([0, T]; H^1(\Omega))}
+ \|\widehat{\varphi}_{h} - \varphi\|_{C([0, T]; L^2(\Omega))}
  + \|\widehat{v}_{h} - \varphi_{t}\|_{C([0, T]; L^2(\Omega))}
\leq M h^{1/2}    
\end{align*}
for all $h \in (0, h_{00})$, where $(1 \star w)(t) := \int_{0}^{t} w(s)\,ds$ 
for vector-valued functions $w$ summable in $(0, T)$.   
\end{thm}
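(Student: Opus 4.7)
The plan is to derive the rate by a coupled energy argument for the error system. Set $\theta := -1/u$, $\Phi := \widehat{\varphi}_h - \varphi$, $V := \widehat{v}_h - \varphi_t$, and $U := 1\star(\overline{u}_h - u)$; note that $\Phi(0) = V(0) = 0$ and $U(0) = 0$. The first step is to integrate both the first equation of \ref{Ph} and of \eqref{P} in time on $(0,t)$ and subtract, using that $\widehat{\theta}_h(0) = \theta(0) = \theta_0$, to obtain the relation
\[
(\widehat{\theta}_h - \theta) + \Phi - \Delta U = 1\star(\overline{f}_h - f) \quad \mbox{in}\ (H^1(\Omega))^*,
\]
together with the integrated boundary condition $\partial_\nu U + U = 1\star(\overline{g}_h - g)$ on $\partial\Omega$. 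The second step is to test this identity with $U_t = \overline{u}_h - u$ and integrate on $(0,t)$: the diffusion and boundary terms produce $\tfrac{1}{2}\|\nabla U(t)\|_{L^2(\Omega)}^2 + \tfrac{1}{2}\|U(t)\|_{L^2(\partial\Omega)}^2$, which is equivalent to $\tfrac{1}{2}\|U(t)\|_{H^1(\Omega)}^2$ up to a positive constant.

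The crucial monotonicity trick is the splitting
\[
\int_\Omega (\widehat{\theta}_h - \theta)(\overline{u}_h - u) = \int_\Omega (\overline{\theta}_h - \theta)(\overline{u}_h - u) + \int_\Omega (\widehat{\theta}_h - \overline{\theta}_h)(\overline{u}_h - u),
\]
in which the first term is non-negative since $s \mapsto -1/s$ is monotone on $(0, \infty)$ and $\overline{u}_h, u > 0$, while the second is absorbed using \eqref{tool4}, which gives $\|\widehat{\theta}_h - \overline{\theta}_h\|_{L^2(0,T;(H^1(\Omega))^*)} = O(h)$, combined with the uniform bound for $\overline{u}_h - u$ in $L^2(0,T;H^1(\Omega))$ coming from Section~\ref{Sec4}. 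The data perturbations $1\star(\overline{f}_h - f)$ and $1\star(\overline{g}_h - g)$ are $O(h)$ in $L^\infty(0,T;L^2)$ thanks to the additional $W^{1,1}$ regularity of $f$ and $g$, and the cross term involving $\Phi$ will be absorbed later through Gronwall.

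For the phase equation, subtracting the second equation of \eqref{P} from that of \ref{Ph}, using $\overline{z}_h = (\widehat{v}_h)_t$ so that $\overline{z}_h - \varphi_{tt} = V_t$, and $\overline{v}_h - \varphi_t = V + (\overline{v}_h - \widehat{v}_h)$, yields a differential identity for $V$. Testing with $V$ produces $\tfrac{1}{2}\tfrac{d}{dt}\|V\|_{L^2(\Omega)}^2 + \|V\|_{L^2(\Omega)}^2$ on the left. The right-hand side is controlled as follows: the differences $\beta(\overline{\varphi}_h) - \beta(\varphi)$ and $\pi(\overline{\varphi}_h) - \pi(\varphi)$ are estimated by the (local) Lipschitz constants of $\beta$ and $\pi$ applied to $\overline{\varphi}_h - \varphi$, which in turn is controlled by $\|\Phi\|_{L^2} + h\|\overline{v}_h\|_{L^\infty}$ via \eqref{tool5}; the non-local term $a(\cdot)\underline{\varphi}_h - J \ast \underline{\varphi}_h$ is handled through assumption \textbf{A1} and the identity $\underline{\varphi}_h = \overline{\varphi}_h - h(\widehat{\varphi}_h)_t$ from \eqref{tool7}; and $\|\overline{v}_h - \widehat{v}_h\|_{L^2(0,T;L^2(\Omega))}$ contributes $O(h)$ by \eqref{tool6}. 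The evolution of $\|\Phi\|_{L^2}^2$ is tied to $\|V\|_{L^2}^2$ and $h^2$ through $\Phi_t = V + (\overline{v}_h - \widehat{v}_h)$.

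The coupling term $\int_0^t\!\int_\Omega (\overline{u}_h - u)V = \int_0^t\!\int_\Omega U_t V$ is treated by integration by parts in time, producing a boundary contribution $\int_\Omega U(t) V(t)$ bounded via Young's inequality by $\epsilon\|V(t)\|_{L^2}^2 + C_\epsilon \|U(t)\|_{L^2}^2$ (absorbed into the parabolic control of $\|U(t)\|_{H^1}^2$) and an interior contribution $\int_0^t\!\int_\Omega U\, V_t$ that is estimated by substituting $V_t$ from the phase error equation, yielding terms that are either $O(h)$ or amenable to Gronwall. Summing the two energy identities and applying Gronwall's lemma delivers the claimed rate. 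The principal obstacle is precisely this coupling: the monotonicity inequality is at our disposal only for $\overline{\theta}_h - \theta$ rather than $\widehat{\theta}_h - \theta$, so one must carefully track the $O(h)$ discrepancies between the piecewise linear interpolants ($\widehat{\theta}_h$, $\widehat{\varphi}_h$, $\widehat{v}_h$) and the piecewise constant ones ($\overline{\theta}_h$, $\overline{\varphi}_h$, $\overline{v}_h$, $\underline{\varphi}_h$); the exponent $1/2$ in $h^{1/2}$ reflects the half-order loss incurred when tuning Young's inequality to absorb the cross-terms against the $O(h)$ residuals from the time discretization.
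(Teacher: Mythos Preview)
Your plan is sound and essentially matches the paper's argument, with two minor deviations worth noting. First, a sign slip: in this setting $u<0$ and $\overline{u}_h<0$ (so that $\theta=-1/u>0$); the monotonicity you invoke is that of $\alpha(r)=-1/r$ on $(-\infty,0)$, not on $(0,\infty)$, but the inequality $(\overline{\theta}_h-\theta)(\overline{u}_h-u)\ge 0$ is unchanged. Second, the paper organizes the estimate as a Cauchy criterion between two discrete solutions (Lemmas~\ref{LemC1}--\ref{Cauchy}) and then lets $\tau\searrow 0$, whereas you compare the discrete solution with the continuous one directly; this is only a cosmetic difference. The more substantive simplification in the paper concerns the phase equation: rather than testing the error equation with $V$ and then undoing the coupling $\int_0^t\!\int_\Omega U_t V$ by parts, the paper first integrates the second equation in~\ref{Ph} (and in \eqref{P}) over $(0,t)$, obtaining an identity of the form $V(t)+\Phi(t)+\cdots = U(t)$, and then simply squares to get $\|V(t)\|_{L^2}^2\le C\bigl(\|U(t)\|_{H^1}^2+\int_0^t\|\Phi\|_{L^2}^2+\int_0^t\|V\|_{L^2}^2+h+\tau\bigr)$. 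This sidesteps entirely the substitution of $V_t$ back into the interior term that you outline, and leads to a closed Gronwall system with no further integration by parts.
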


\vspace{10pt}

\section{Existence for the discrete problem}\label{Sec3}
In this section we will show Theorem \ref{maintheorem2}. 
\begin{lem}\label{elliptic1}
For all 
$h >0$, $G \in L^2(\Omega)$, 
$G_{\partial\Omega} \in H^{1/2}(\partial\Omega)$, 
if $G_{\partial\Omega} \leq 0$ a.e.\ on $\partial\Omega$, 
then there exists a unique function $u \in H^2(\Omega)$ satisfying  
\[
u < 0 \ \ \mbox{a.e.\ in}\ \Omega,\quad 
- \frac{1}{u} - h\Delta u = G\ \ \mbox{a.e.\ in}\ \Omega,\quad 
\partial_{\nu} u + u = G_{\partial\Omega} \ \ \mbox{a.e.\ on}\ \partial\Omega. 
\] 
\end{lem}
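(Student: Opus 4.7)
The plan is to obtain $u$ as the limit, as $\lambda \searrow 0$, of solutions $u_\lambda$ of a regularized problem in which the singular nonlinearity $r \mapsto -1/r$ is replaced by its Yosida approximation. Regarding $\beta(r) := -1/r$ on $(-\infty,0)$ as the subdifferential of the convex, lower semicontinuous function $j(r) := -\ln(-r)$ for $r < 0$ (extended by $+\infty$ for $r \geq 0$), let $\beta_\lambda : \mathbb{R} \to \mathbb{R}$ denote its Yosida approximation; a direct computation gives $\beta_\lambda(r) = (r + \sqrt{r^2 + 4\lambda})/(2\lambda) \geq 0$, and combining the resolvent identity $r = J_\lambda(r) + \lambda\beta_\lambda(r)$ with $J_\lambda(r)\beta_\lambda(r) = -1$ yields the key pointwise identity
\begin{equation*}
r\,\beta_\lambda(r) \;=\; -1 + \lambda\,\beta_\lambda(r)^{2} \;\geq\; -1 \qquad (r \in \mathbb{R}).
\end{equation*}
For each fixed $\lambda > 0$, since $\beta_\lambda$ is Lipschitz continuous and monotone, the approximate problem
\begin{equation*}
\beta_\lambda(u_\lambda) - h\Delta u_\lambda = G \ \text{in}\ \Omega, \qquad \partial_\nu u_\lambda + u_\lambda = G_{\partial\Omega} \ \text{on}\ \partial\Omega
\end{equation*}
admits a unique solution $u_\lambda \in H^{1}(\Omega)$ by the Minty--Browder theorem, and standard $L^{2}$-elliptic regularity for the Robin problem lifts this to $u_\lambda \in H^{2}(\Omega)$.

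Next I derive $\lambda$-uniform a priori estimates. Testing by $u_\lambda$, using $u_\lambda\beta_\lambda(u_\lambda) \geq -1$ to handle the nonlinear term, and absorbing via the Young and trace--Poincar\'e inequalities yield a uniform bound $\|u_\lambda\|_{H^{1}(\Omega)} \leq C$. Testing by $\beta_\lambda(u_\lambda) \in H^{1}(\Omega)$ and integrating by parts produces
\begin{equation*}
\int_{\Omega} \beta_\lambda(u_\lambda)^{2} + h\int_{\Omega} \beta_\lambda'(u_\lambda)|\nabla u_\lambda|^{2} + h\int_{\partial\Omega}(u_\lambda - G_{\partial\Omega})\beta_\lambda(u_\lambda) = \int_{\Omega} G\,\beta_\lambda(u_\lambda),
\end{equation*}
where the second summand is nonnegative by monotonicity, while $\beta_\lambda \geq 0$, $G_{\partial\Omega} \leq 0$ and the key bound above give $(u_\lambda - G_{\partial\Omega})\beta_\lambda(u_\lambda) = u_\lambda\beta_\lambda(u_\lambda) + (-G_{\partial\Omega})\beta_\lambda(u_\lambda) \geq -1$ pointwise on $\partial\Omega$. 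After the Young inequality on the right, this yields the $\lambda$-uniform estimate
\begin{equation*}
\|\beta_\lambda(u_\lambda)\|_{L^{2}(\Omega)}^{2} \;\leq\; \|G\|_{L^{2}(\Omega)}^{2} + 2h\,|\partial\Omega|,
\end{equation*}
and then, from $-h\Delta u_\lambda = G - \beta_\lambda(u_\lambda)$ together with the Robin boundary condition, elliptic regularity gives a uniform $H^{2}$-bound on $u_\lambda$.

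Passing to a subsequence, $u_\lambda \rightharpoonup u$ weakly in $H^{2}(\Omega)$, strongly in $H^{1}(\Omega)$ and a.e.\ in $\Omega$, while $\beta_\lambda(u_\lambda) \rightharpoonup \xi$ weakly in $L^{2}(\Omega)$. The Robin condition and the limit equation $\xi - h\Delta u = G$ are preserved; the strong convergence $J_\lambda(u_\lambda) = u_\lambda - \lambda\beta_\lambda(u_\lambda) \to u$ in $L^{2}(\Omega)$, combined with maximal monotonicity of $\beta$, identifies $\xi = \beta(u)$, which forces $u \in D(\beta) = (-\infty,0)$ a.e., i.e.\ $u < 0$ a.e.\ in $\Omega$. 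Uniqueness follows from strict monotonicity of $r \mapsto -1/r$ on $(-\infty,0)$: subtracting two solutions $u_{1}, u_{2}$, testing by $u_{1} - u_{2}$, and using the Robin condition gives an identity whose summands are all nonnegative, whence $u_{1} = u_{2}$. The main obstacle of the proof is the uniform $L^{2}$-bound on $\beta_\lambda(u_\lambda)$: the boundary term arising from integration by parts is not of definite sign, and its control hinges on both the hypothesis $G_{\partial\Omega} \leq 0$ and the structural pointwise bound $r\beta_\lambda(r) \geq -1$ intrinsic to the Yosida approximation of $\beta(r) = -1/r$.
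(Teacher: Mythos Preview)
Your proof is correct. Both your argument and the paper's hinge on the same two structural facts---the sign hypothesis $G_{\partial\Omega}\le 0$ together with the pointwise bound $r\,\beta_\lambda(r)\ge -1$ for the Yosida approximation of $r\mapsto -1/r$---but the packaging differs. The paper sets $\mathcal{A}u:=-\Delta u - u$ (with the Robin data $G_{\partial\Omega}$ built into the affine domain $D(\mathcal{A})$) and $\mathcal{B}u:=-h^{-1}/u$, then verifies the compatibility inequality
\[
(\mathcal{A}u,\mathcal{B}_\lambda u)_{L^2(\Omega)} \;\ge\; -C\bigl(\lambda\|\mathcal{B}_\lambda u\|_{L^2(\Omega)}^2+1\bigr)
\]
required by Barbu's abstract sum theorem to conclude that $\mathcal{A}+\mathcal{B}$ is maximal monotone, whence surjectivity of $I+(\mathcal{A}+\mathcal{B})$ gives the solution in one stroke. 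You instead carry the Yosida approximation through by hand: solve the regularized Robin problem, test by $u_\lambda$ and by $\beta_\lambda(u_\lambda)$ to obtain uniform $H^1$, $L^2$ and then $H^2$ bounds, and pass to the limit by compactness and the strong--weak closedness of the maximal monotone graph. Your route is more self-contained and makes explicit where each hypothesis is used (and in particular produces the clean quantitative bound $\|\beta_\lambda(u_\lambda)\|_{L^2}^2\le\|G\|_{L^2}^2+2h|\partial\Omega|$); the paper's route is terser once one grants the abstract theorem, but that theorem's proof is precisely the kind of Yosida-and-compactness argument you wrote out.
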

\begin{proof}
We set the operator ${\cal A} : D({\cal A}) \subset L^2(\Omega) \to L^2(\Omega)$ as
\[
{\cal A}u := - \Delta u - u  
\quad \mbox{for}\ 
u \in D({\cal A}) := \{ u \in H^2(\Omega) \ |\ \partial_{\nu} u + u = G_{\partial\Omega} 
\ \ \mbox{a.e.\ on}\ \partial\Omega \}. 
\]
Then this operator is maximal monotone. 
Also, we define 
the operator ${\cal B} : D({\cal B}) \subset L^2(\Omega) \to L^2(\Omega)$ as 
\[
{\cal B}u := - \frac{h^{-1}}{u}  
\quad \mbox{for}\ 
u \in D({\cal B}) := \{ u \in L^2(\Omega) \ |\ u < 0 \ \  \mbox{a.e.\ in}\ \Omega \}. 
\]
Then this operator is maximal monotone. 
Now we set the function $b : D(b) \subset \mathbb{R} \to \mathbb{R}$ as 
$b(r) := - \frac{h^{-1}}{r}$ for $r \in D(b) := \{r \in \mathbb{R} \ |\ r < 0 \}$. 
Let $\lambda > 0$, 
let ${\cal B}_{\lambda}$ be the Yosida approximation of ${\cal B}$ 
and let $b_{\lambda}$ be the Yosida approximation of $b$ on $\mathbb{R}$. 
Then, noting that 
$b_{\lambda}$ is monotone, 
$u = \lambda b_{\lambda}(u) + (1 + \lambda b)^{-1}(u)$,  
$b_{\lambda}(u) = -\frac{h^{-1}}{(1 + \lambda b)^{-1}(u)} > 0$,  
and $G_{\partial\Omega} \leq 0$ a.e.\ on $\partial\Omega$, 
we can confirm that 
\begin{align*}
&({\cal A}u, {\cal B}_{\lambda}u)_{L^2(\Omega)} 
\notag \\[2mm] 
&= \int_{\Omega} b_{\lambda}'(u)|\nabla u|^2 
    + \int_{\partial\Omega} ub_{\lambda}(u) 
    - \int_{\partial\Omega} G_{\partial\Omega}b_{\lambda}(u) 
    - \int_{\Omega} ub_{\lambda}(u) 
\notag \\ 
&\geq \lambda\|b_{\lambda}(u)\|_{L^2(\partial\Omega)}^2 
         - h^{-1}|\partial\Omega| 
         - \lambda\|b_{\lambda}(u)\|_{L^2(\Omega)}^2 
         + h^{-1}|\Omega| 
\notag \\ 
&\geq - \max\{1, h^{-1}|\partial\Omega|\}
                                          (\lambda\|{\cal B}_{\lambda}(u)\|_{L^2(\Omega)}^2 + 1)
\end{align*}
for all $u \in D({\cal A})$ and all $\lambda > 0$. 
Therefore we can conclude that 
the operator ${\cal A} + {\cal B}$ is maximal monotone 
(see e.g., Barbu \cite[Theorem 2.7]{Barbu2}).
\end{proof}
\begin{lem}\label{elliptic2}
For all $G \in L^2(\Omega)$ 
and all $h \in (0, \min\{1, 1/\|\pi'\|_{L^{\infty}(\mathbb{R})}\})$
there exists a unique solution $\varphi \in L^2(\Omega)$ of the equation 
\[
\varphi + h\varphi + h^{2} \beta(\varphi) + h^{2} \pi(\varphi) = G 
\quad \mbox{a.e.\ in}\ \Omega.  
\]
\end{lem}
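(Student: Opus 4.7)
The plan is to reduce the problem to a scalar one by solving the equation pointwise in $x \in \Omega$. To this end I introduce
\[
b_{h}(r) := (1+h)r + h^{2}\beta(r) + h^{2}\pi(r) \qquad (r \in \mathbb{R})
\]
and aim to show that $b_{h} : \mathbb{R} \to \mathbb{R}$ is a homeomorphism with a globally Lipschitz inverse. Once this is established, the candidate solution is simply $\varphi(x) := b_{h}^{-1}(G(x))$ for a.e.\ $x \in \Omega$, and both measurability and the $L^{2}$-bound will follow from continuity and a Lipschitz estimate for $b_{h}^{-1}$.

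For the analysis of $b_{h}$, I would first note that it is continuous, since $\beta$ is locally Lipschitz by {\bf A2} and $\pi$ is Lipschitz by {\bf A3}. Strict monotonicity comes from combining the monotonicity of $\beta$ with the Lipschitz bound $\pi(s) - \pi(r) \geq -\|\pi'\|_{L^{\infty}(\mathbb{R})}(s-r)$ valid for $r \leq s$, which yields
\[
b_{h}(s) - b_{h}(r) \geq \bigl(1 + h - h^{2}\|\pi'\|_{L^{\infty}(\mathbb{R})}\bigr)(s-r).
\]
The hypothesis $h < \min\{1,\, 1/\|\pi'\|_{L^{\infty}(\mathbb{R})}\}$ gives $h^{2}\|\pi'\|_{L^{\infty}(\mathbb{R})} < h$, so the constant $\alpha_{h} := 1 + h - h^{2}\|\pi'\|_{L^{\infty}(\mathbb{R})}$ satisfies $\alpha_{h} > 1$. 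Consequently $b_{h}$ is strictly increasing and $b_{h}^{-1}$ is Lipschitz with constant $\alpha_{h}^{-1} < 1$, and coercivity $b_{h}(r) \to \pm\infty$ as $r \to \pm\infty$ follows from the one-sided bound $b_{h}(r) \geq b_{h}(0) + \alpha_{h} r$ for $r \geq 0$ (and symmetrically for $r \leq 0$). Hence $b_{h}$ is a bijection of $\mathbb{R}$.

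To conclude, I would set $\varphi(x) := b_{h}^{-1}(G(x))$; measurability is automatic from continuity of $b_{h}^{-1}$ and measurability of $G$. Since $\widehat{\beta}(0) = 0 = \min \widehat{\beta}$ forces $\beta(0) = 0$, we have $b_{h}(0) = h^{2}\pi(0)$, and the Lipschitz bound yields
\[
|\varphi(x)| \leq \alpha_{h}^{-1}\bigl(|G(x)| + h^{2}|\pi(0)|\bigr),
\]
so $\varphi \in L^{2}(\Omega)$. Uniqueness is immediate from the strict monotonicity of $b_{h}$. I do not foresee any real obstacle; the whole argument turns on the single observation that the smallness condition $h\|\pi'\|_{L^{\infty}(\mathbb{R})} < 1$ is exactly what allows the Lipschitz perturbation $h^{2}\pi$ to be absorbed into the strong monotonicity provided by the $(1+h)I + h^{2}\beta$ part.
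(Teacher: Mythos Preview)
Your argument is correct. The paper itself does not give a self-contained proof of this lemma but simply refers to \cite[Lemma~2.1]{K7}; nonetheless, the constant $1 + h - h^{2}\|\pi'\|_{L^{\infty}(\mathbb{R})}$ that you isolate is exactly the one that reappears in the paper's contraction estimate in the proof of Theorem~2.1, so your pointwise strong-monotonicity argument is in line with the intended approach.
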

\begin{proof}
We can show this lemma in reference to \cite[Lemma 2.1]{K7}.   
\end{proof}

\begin{prth2.1}
We can rewrite \ref{Pn} as 
\begin{equation*}\tag*{(Q)$_{n}$}\label{Qn}
     \begin{cases}
         -\frac{1}{u_{n+1}} - h\Delta u_{n+1} 
         = - \varphi_{n+1} 
            + hf_{n+1} + \varphi_{n} + \theta_{n},  
      \quad \partial_{\nu} u_{n+1} + u_{n+1} = g_{n+1}, 
     \\[5mm] 
         \varphi_{n+1} + h\varphi_{n+1} + h^2 \beta(\varphi_{n+1}) + h^2 \pi(\varphi_{n+1}) 
         \\[2mm]
         = h^2 u_{n+1} + \varphi_{n} + hv_{n} + h\varphi_{n}  
            -h^2 a(\cdot)\varphi_{n} + h^2 J\ast\varphi_{n}.  
     \end{cases}
 \end{equation*}
To prove Theorem \ref{maintheorem2} 
it suffices to establish existence and uniqueness
of solutions to \ref{Qn} in the case that $n = 0$.  
Let $h \in (0, \min\{1, 1/\|\pi'\|_{L^{\infty}(\mathbb{R})}\})$. 
Then, owing to Lemma \ref{elliptic1},  
for all $\varphi \in L^2(\Omega)$ 
there exists a unique function $\overline{u} \in H^2(\Omega)$ 
such that    
\begin{align}\label{c1}
 -\frac{1}{\overline{u}} - h\Delta \overline{u} 
         = - \varphi 
            + hf_{1} + \varphi_{0} + \theta_{0},  
      \quad \partial_{\nu} \overline{u} + \overline{u} = g_{1}.  
\end{align}
Also, we see from Lemma \ref{elliptic2} that 
for all $u \in L^2(\Omega)$ 
there exists a unique function $\overline{\varphi} \in L^2(\Omega)$ 
such that   
\begin{align}\label{c2}
\overline{\varphi} + h\overline{\varphi} 
+ h^2 \beta(\overline{\varphi}) + h^2 \pi(\overline{\varphi}) 
= h^2 u + \varphi_{0} + hv_{0} + h\varphi_{0}  
            -h^2 a(\cdot)\varphi_{0} + h^2 J\ast\varphi_{0}. 
\end{align}
Thus we can set $\Phi : L^2(\Omega) \to L^2(\Omega)$, 
$\Psi : L^2(\Omega) \to L^2(\Omega)$ 
and $B : L^2(\Omega) \to L^2(\Omega)$ as   
\[
\Phi\varphi = \overline{u},\ \Psi u = \overline{\varphi} 
\quad \mbox{for}\ \varphi, u \in L^2(\Omega)
\]
and 
\[
B = \Psi \circ \Phi, 
\]
respectively. 
Moreover, we can obtain that for all $\varphi, \widetilde{\varphi} \in L^2(\Omega)$, 
\begin{equation*}
\|B\varphi - B\widetilde{\varphi}\|_{L^2(\Omega)} 
\leq \frac{C_{1} h}{1 + h - \|\pi'\|_{L^{\infty}(\mathbb{R})}h^2}
                                                    \|\varphi - \widetilde{\varphi}\|_{L^2(\Omega)}   
\end{equation*}
(cf.\ \cite[Proof of Theorem 1.2]{K8}). 
Here there exists 
$h_{01} \in (0, \min\{1, 1/\|\pi'\|_{L^{\infty}(\mathbb{R})}\})$  
such that 
\[
\frac{C_{1} h}{1 + h - \|\pi'\|_{L^{\infty}(\mathbb{R})}h^2} \in (0, 1) 
\]
for all $h \in (0, h_{01})$. 
Hence $B : L^2(\Omega) \to L^2(\Omega)$ 
is a contraction mapping in $L^2(\Omega)$ 
for all $h \in (0, h_{01})$    
and then 
it follows from the Banach fixed-point theorem that 
for all $h \in (0, h_{01})$  
there exists a unique function $\varphi_{1} \in L^2(\Omega)$ 
such that $\varphi_{1} = B\varphi_{1} \in L^2(\Omega)$. 
Thus, for all $h \in (0, h_{01})$,   
putting $u_{1} := \Phi\varphi_{1} \in H^2(\Omega)$ implies that 
there exists a unique pair $(u_{1}, \varphi_{1}) \in (L^2(\Omega))^2$ 
satisfying \ref{Qn} in the case that $n = 0$. 
Moreover, 
we can prove that 
there exists $h_{0} \in (0, h_{01})$ such that 
for all $h \in (0, h_{0})$ there exists a constant $C_{1} = C_{1}(h) > 0$ 
such that $|\varphi_{1}(x)| \leq C_{1}$ for a.a.\ $x \in \Omega$ 
(cf.\ \cite[Proof of Theorem 1.2]{K8}). 
\qed
\end{prth2.1}

\vspace{10pt}
 
\section{Uniform estimates for the discrete problem}\label{Sec4}

In this section we will derive a priori estimates for \ref{Ph}. 
\begin{lem}\label{esth1}
Let $h_{0}$ be as in Theorem \ref{maintheorem2}.  
Then there exist constants $h_{1} \in (0, h_{0})$ and $C>0$ depending on the data 
such that 
\begin{align*}
&\|\overline{\varphi}_{h}\|_{L^{\infty}(0, T; L^2(\Omega))}^2 
+ \|\overline{v}_{h}\|_{L^{\infty}(0, T; L^2(\Omega))}^2 
+ \|\overline{u}_{h}\|_{L^2(0, T; H^1(\Omega))}^2 
+ \|\overline{\theta}_{h}\|_{L^{\infty}(0, T; L^1(\Omega))}  
\notag \\ 
& + \|\ln\overline{\theta}_{h}\|_{L^{\infty}(0, T; L^1(\Omega))} 
\leq C 
\end{align*}
for all $h \in (0, h_{1})$.   
\end{lem}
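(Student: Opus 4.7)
The plan is to derive the estimate by a combined energy identity: multiply the first equation of \ref{Pn} by $u_{n+1}$ and the second equation by $v_{n+1}$, integrate over $\Omega$, multiply each by $h$, and sum over $k = 0, \ldots, n$. Adding the two resulting identities produces a key cancellation of the coupling terms $h\int_{\Omega} u_{k+1} v_{k+1}$ (which appear with opposite signs), leaving only dissipative and forcing contributions.

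For the temperature-like equation, the convexity of $F(\theta) = -\ln\theta$ (equivalently, $x - 1 \geq \ln x$ applied to $x = \theta_k/\theta_{k+1}$) yields the pointwise inequality $u_{k+1}(\theta_{k+1} - \theta_k) \geq \ln\theta_k - \ln\theta_{k+1}$, which telescopes to give $-\int_\Omega \ln\overline{\theta}_h + \int_\Omega \ln\theta_0$ on the LHS, together with $h\sum_k \|\nabla u_{k+1}\|_{L^2}^2 + h\sum_k \|u_{k+1}\|_{L^2(\partial\Omega)}^2$ from integration by parts and the Robin condition. For the phase equation tested by $v_{k+1}$, I would use the discrete identity $(z_{k+1}, v_{k+1}) = \frac{1}{2h}(\|v_{k+1}\|^2 - \|v_k\|^2 + \|v_{k+1} - v_k\|^2)$ and the convexity of $\widehat{\beta}$, giving $h(\beta(\varphi_{k+1}), v_{k+1}) \geq \int_\Omega(\widehat{\beta}(\varphi_{k+1}) - \widehat{\beta}(\varphi_k))$; after telescoping, this yields $\tfrac12\|v_{n+1}\|_{L^2}^2$ and $\int_\Omega \widehat{\beta}(\varphi_{n+1})$ on the LHS.

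To control the RHS, I would use the equivalence $\|u\|_{H^1(\Omega)}^2 \sim \|\nabla u\|_{L^2}^2 + \|u\|_{L^2(\partial\Omega)}^2$ (Robin-type norm) to absorb the forcing terms $h\sum_k(f_{k+1}, u_{k+1})$ and $h\sum_k(g_{k+1}, u_{k+1})_{L^2(\partial\Omega)}$ via Young's inequality. The nonlocal term is controlled by A1: $\|a\varphi_k - J\ast\varphi_k\|_{L^2} \leq C\|\varphi_k\|_{L^2}$, and Young's gives $h\sum_k(a\varphi_k - J\ast\varphi_k, v_{k+1}) \leq Ch\sum_k \|\varphi_k\|_{L^2}^2 + \eta h\sum_k \|v_{k+1}\|_{L^2}^2$. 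The $\pi$ term is handled analogously using A3. The residual $\|\varphi_k\|_{L^2}^2$ is tied back to $\|v_{j+1}\|^2$ through $\varphi_k = \varphi_0 + h\sum_{j<k} v_{j+1}$, which by Cauchy-Schwarz gives $\|\varphi_k\|_{L^2}^2 \leq 2\|\varphi_0\|_{L^2}^2 + 2Th\sum_{j<k} \|v_{j+1}\|_{L^2}^2$.

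A discrete Gronwall argument then closes the estimate, determining $h_1 \in (0, h_0)$ small enough that absorption constants are admissible, and yielding uniform bounds on $\|\overline{v}_h\|_{L^\infty(L^2)}$, $\|\overline{u}_h\|_{L^2(H^1)}$, $\|\overline{\varphi}_h\|_{L^\infty(L^2)}$, and $-\int_\Omega \ln\overline{\theta}_h$ in $L^\infty(0,T)$. For the $L^1$ bound on $\overline{\theta}_h$, I would test the first equation by $w = 1$ and sum in $k$; the one non-sign-definite term $-h\sum_k \int_{\partial\Omega} u_{k+1}$ is controlled by the trace bound already obtained. Finally, the $L^1$ bound on $\ln\overline{\theta}_h$ follows from the decomposition $|\ln\theta| = -\ln\theta + 2(\ln\theta)_+$ combined with the elementary pointwise inequality $(\ln\theta)_+ \leq \theta$. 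The main technical obstacle is the discrete Gronwall closure: the nonlocal convolution and the $\pi$ term both produce $\|\varphi_k\|_{L^2}^2$ contributions that must be reduced to a summable form in $v_{j+1}$, so that the Gronwall constant remains independent of $h$ while still permitting the absorption of the $\eta h \sum_k\|v_{k+1}\|^2$ term into the LHS.
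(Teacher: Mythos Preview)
Your overall strategy is sound and closely parallels the paper's, but there is a genuine gap in the order of operations. You plan to run the discrete Gronwall argument first---using only the test function $u_{k+1}$ for the first equation---and to obtain the $L^1$ bound on $\overline{\theta}_h$ afterwards by testing with $w=1$. The difficulty is that the term $-\int_\Omega \ln\theta_m$ that lands on the left-hand side of your combined energy inequality has no sign: wherever $\theta_m > 1$ this contribution is negative, so the Gronwall step does not close. Concretely, from
\[
\tfrac12\|v_m\|_{L^2}^2 + \int_\Omega \widehat\beta(\varphi_m) - \int_\Omega \ln\theta_m + c\,h\sum_{k<m}\|u_{k+1}\|_{H^1}^2 \le C + C h\sum_{j}\|v_j\|_{L^2}^2
\]
you cannot extract a bound on $\|v_m\|_{L^2}^2$ without already controlling $\int_\Omega(\ln\theta_m)_+ \le \|\theta_m\|_{L^1}$, which is precisely the quantity you intended to estimate \emph{after} the Gronwall. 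The argument is circular.

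The fix is to couple the two tests: either test the first equation in \ref{Pn} by $h(1+u_{n+1})$ (this is what the paper does), or equivalently add the $w=1$ identity into the Gronwall loop rather than after it. Then the left-hand side carries $\|\theta_m\|_{L^1} - \int_\Omega \ln\theta_m$, and since $r - \ln r \ge 1$ for all $r>0$ (indeed $r - \ln r \ge \tfrac13(r + |\ln r|)$), this combination is nonnegative and controls both $\|\theta_m\|_{L^1}$ and $\|\ln\theta_m\|_{L^1}$ simultaneously. The extra right-hand terms generated by the constant test function, namely $h\sum_k(v_{k+1},1)_{L^2(\Omega)}$ and $h\sum_k\int_{\partial\Omega}u_{k+1}$, are harmless once they are inside the Gronwall: they are absorbed by Young's inequality into $h\sum_k\|v_{k+1}\|_{L^2}^2$ and $h\sum_k\|u_{k+1}\|_{H^1}^2$ respectively. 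Your remaining steps (the treatment of $\widehat\beta$, the nonlocal and $\pi$ terms, and the control of $\|\varphi_k\|_{L^2}$ via $\varphi_k = \varphi_0 + h\sum_{j<k}v_{j+1}$) are all correct.
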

\begin{proof}
Multiplying the identity $v_{n+1} = \frac{\varphi_{n+1} - \varphi_{n}}{h}$ 
by $h\varphi_{n+1}$ means that 
\begin{align}\label{e1}
\frac{1}{2}\|\varphi_{n+1}\|_{L^2(\Omega)}^2 
- \frac{1}{2}\|\varphi_{n}\|_{L^2(\Omega)}^2 
+ \frac{1}{2}\|\varphi_{n+1} - \varphi_{n}\|_{L^2(\Omega)}^2 
= h(\varphi_{n+1}, v_{n+1})_{L^2(\Omega)}. 
\end{align}
We test the second equation in \ref{Pn} by $hv_{n+1}$ to infer that 
\begin{align}\label{e2}
&\frac{1}{2}\|v_{n+1}\|_{L^2(\Omega)}^2 
- \frac{1}{2}\|v_{n}\|_{L^2(\Omega)}^2 
+ \frac{1}{2}\|v_{n+1} - v_{n}\|_{L^2(\Omega)}^2 
+ h\|v_{n+1}\|_{L^2(\Omega)}^2 
\notag \\
&+ (\beta(\varphi_{n+1}), \varphi_{n+1} - \varphi_{n})_{L^2(\Omega)} 
\notag \\[2mm] 
&= h(u_{n+1}, v_{n+1})_{L^2(\Omega)} 
     - h(\pi(\varphi_{n+1}), v_{n+1})_{L^2(\Omega)} 
     - h(a(\cdot)\varphi_{n} - J\ast\varphi_{n}, v_{n+1})_{L^2(\Omega)}. 
\end{align}
Here the condition {\bf A2} leads to the inequality 
\begin{align}\label{e3}
(\beta(\varphi_{n+1}), \varphi_{n+1} - \varphi_{n})_{L^2(\Omega)} 
\geq \|\widehat{\beta}(\varphi_{n+1})\|_{L^1(\Omega)} 
        - \|\widehat{\beta}(\varphi_{n})\|_{L^1(\Omega)}.  
\end{align}
Thus we deduce from \eqref{e1}-\eqref{e3}, the Young inequality, 
{\bf A1}, and {\bf A3} that 
there exists a constant $C_{1} > 0$ such that 
\begin{align}\label{e4}
&\frac{1}{2}\|\varphi_{n+1}\|_{L^2(\Omega)}^2 
- \frac{1}{2}\|\varphi_{n}\|_{L^2(\Omega)}^2 
+ \frac{1}{2}\|\varphi_{n+1} - \varphi_{n}\|_{L^2(\Omega)}^2 
\notag \\ 
&+ \frac{1}{2}\|v_{n+1}\|_{L^2(\Omega)}^2 
- \frac{1}{2}\|v_{n}\|_{L^2(\Omega)}^2 
+ \frac{1}{2}\|v_{n+1} - v_{n}\|_{L^2(\Omega)}^2 
+ h\|v_{n+1}\|_{L^2(\Omega)}^2 
\notag \\ 
&+ \|\widehat{\beta}(\varphi_{n+1})\|_{L^1(\Omega)} 
        - \|\widehat{\beta}(\varphi_{n})\|_{L^1(\Omega)} 
\notag \\[2mm]
&\leq h(u_{n+1}, v_{n+1})_{L^2(\Omega)} 
         + C_{1}h + C_{1}\|\varphi_{n+1}\|_{L^2(\Omega)}^2 
         + C_{1}\|\varphi_{n}\|_{L^2(\Omega)}^2 
         + C_{1}\|v_{n+1}\|_{L^2(\Omega)}^2 
\end{align}
for all $h \in (0, h_{0})$. 
Next we multiply the first equation in \ref{Pn} by $h(1 + u_{n+1})$ to obtain that 
\begin{align}\label{e5}
&(\theta_{n+1} - \theta_{n}, 1 + u_{n+1})_{L^2(\Omega)} 
+ h\int_{\Omega}|\nabla u_{n+1}|^2 
+ h\int_{\partial\Omega} |u_{n+1}|^2  
\notag \\[5mm] 
&= h(f_{n+1}, 1 + u_{n+1})_{L^2(\Omega)} 
     - h(u_{n+1}, v_{n+1})_{L^2(\Omega)} 
\notag \\ 
  &\,\quad - h(v_{n+1}, 1)_{L^2(\Omega)} 
     - h\int_{\partial\Omega} u_{n+1} + h\int_{\partial\Omega} g_{n+1}(1 + u_{n+1}). 
\end{align}
Here, noting that $u_{n+1} = - \frac{1}{\theta_{n+1}}$ 
and $r - 1 \geq \ln r$ for all $r > 0$, 
we have that 
\begin{align}\label{e6}
&(\theta_{n+1} - \theta_{n}, 1 + u_{n+1})_{L^2(\Omega)} 
\notag \\[3mm]
&= \|\theta_{n+1}\|_{L^1(\Omega)} - \|\theta_{n}\|_{L^1(\Omega)} 
     + (\theta_{n+1} - \theta_{n}, u_{n+1})_{L^2(\Omega)} 
\notag \\ 
&= \|\theta_{n+1}\|_{L^1(\Omega)} - \|\theta_{n}\|_{L^1(\Omega)} 
     + \int_{\Omega}\left(\frac{\theta_{n}}{\theta_{n+1}} - 1 \right) 
\notag \\ 
&\geq \|\theta_{n+1}\|_{L^1(\Omega)} - \|\theta_{n}\|_{L^1(\Omega)} 
     + \int_{\Omega} \ln\frac{\theta_{n}}{\theta_{n+1}}  
\notag \\ 
&= \|\theta_{n+1}\|_{L^1(\Omega)} - \|\theta_{n}\|_{L^1(\Omega)} 
     + \int_{\Omega} (- \ln\theta_{n+1} + \ln\theta_{n}). 
\end{align}
There exist constants $C_{*}, C^{*} > 0$ such that 
\begin{align}\label{e7}
C_{*}(\|\nabla w\|_{L^2(\Omega)}^2 + \|w\|_{L^2(\partial\Omega)}^2) 
\leq \|w\|_{H^1(\Omega)}^2 
\leq C^{*}(\|\nabla w\|_{L^2(\Omega)}^2 + \|w\|_{L^2(\partial\Omega)}^2) 
\end{align}
for all $w \in H^1(\Omega)$. 
Therefore we see from \eqref{e5}-\eqref{e7} 
and the Young inequality that 
there exists a constant $C_{2} > 0$ such that  
\begin{align}\label{e8}
&\|\theta_{n+1}\|_{L^1(\Omega)} - \|\theta_{n}\|_{L^1(\Omega)} 
     + \int_{\Omega} (- \ln\theta_{n+1} + \ln\theta_{n})
+ \frac{1}{2C^{*}}h\|u_{n+1}\|_{H^1(\Omega)}^2 
\notag \\[2mm] 
& \leq - h(u_{n+1}, v_{n+1})_{L^2(\Omega)} 
         + C_{2}h 
         + C_{2}h\|f_{n+1}\|_{L^2(\Omega)}^2 
         + C_{2}h\|g_{n+1}\|_{L^2(\partial\Omega)}^2 
\notag \\
   &\,\quad+ C_{2}h\|v_{n+1}\|_{L^2(\Omega)}^2
\end{align}
for all $h \in (0, h_{0})$. 
Therefore we add \eqref{e4} to \eqref{e8} 
and sum over $n = 0, ..., m-1$ with $1 \leq m \leq N$ 
to derive that 
\begin{align}\label{e9}
&\frac{1}{2}\|\varphi_{m}\|_{L^2(\Omega)}^2 
+ \frac{1}{2}\|v_{m}\|_{L^2(\Omega)}^2 
+ \|\widehat{\beta}(\varphi_{m})\|_{L^1(\Omega)} 
\notag \\ 
&+ \|\theta_{m}\|_{L^1(\Omega)} 
- \int_{\Omega} \ln\theta_{m} 
+ \frac{1}{2C^*}h\sum_{n=0}^{m-1}\|u_{n+1}\|_{H^1(\Omega)}^2 
\notag \\[4mm]
&\leq \frac{1}{2}\|\varphi_{0}\|_{L^2(\Omega)}^2 
+ \frac{1}{2}\|v_{0}\|_{L^2(\Omega)}^2 
+ \|\widehat{\beta}(\varphi_{0})\|_{L^1(\Omega)} 
+ \|\theta_{0}\|_{L^1(\Omega)} 
- \int_{\Omega} \ln\theta_{0} 
\notag \\ 
&\,\quad + (C_{1} + C_{2})T 
+ C_{2}h\sum_{n=0}^{m-1}\|f_{n+1}\|_{L^2(\Omega)}^2 
+ C_{2}h\sum_{n=0}^{m-1}\|g_{n+1}\|_{L^2(\partial\Omega)}^2 
\notag \\ 
&\,\quad + 2C_{1}h\sum_{n=0}^{m-1}\|\varphi_{n+1}\|_{L^2(\Omega)}^2 
+ (C_{1} + C_{2})h\sum_{n=0}^{m-1}\|v_{n+1}\|_{L^2(\Omega)}^2.  
\end{align}
On the other hand, it holds that 
\begin{align}\label{e10}
\|\theta_{m}\|_{L^1(\Omega)} 
- \int_{\Omega} \ln\theta_{m} 
= \int_{\Omega}(\theta_{m} - \ln\theta_{m}) 
\geq \frac{1}{3}\int_{\Omega}(\theta_{m} + |\ln\theta_{m}|).  
\end{align}
Thus it follows from \eqref{e9} and \eqref{e10} that 
\begin{align*}
&\left(\frac{1}{2} - 2C_{1}h\right)\|\varphi_{m}\|_{L^2(\Omega)}^2 
+ \left(\frac{1}{2} - (C_{1} + C_{2})h\right)\|v_{m}\|_{L^2(\Omega)}^2 
+ \|\widehat{\beta}(\varphi_{m})\|_{L^1(\Omega)} 
\notag \\ 
&+ \frac{1}{3}\|\theta_{m}\|_{L^1(\Omega)} 
+ \frac{1}{3}\|\ln\theta_{m}\|_{L^1(\Omega)} 
+ \frac{1}{2C^*}h\sum_{n=0}^{m-1}\|u_{n+1}\|_{H^1(\Omega)}^2 
\notag \\[4mm]
&\leq \frac{1}{2}\|\varphi_{0}\|_{L^2(\Omega)}^2 
+ \frac{1}{2}\|v_{0}\|_{L^2(\Omega)}^2 
+ \|\widehat{\beta}(\varphi_{0})\|_{L^1(\Omega)} 
+ \|\theta_{0}\|_{L^1(\Omega)} 
+ \|\ln\theta_{0}\|_{L^1(\Omega)} 
\notag \\ 
&\,\quad + (C_{1} + C_{2})T 
+ C_{2}h\sum_{n=0}^{m-1}\|f_{n+1}\|_{L^2(\Omega)}^2 
+ C_{2}h\sum_{n=0}^{m-1}\|g_{n+1}\|_{L^2(\partial\Omega)}^2 
\notag \\ 
&\,\quad + 2C_{1}h\sum_{j=0}^{m-1}\|\varphi_{j}\|_{L^2(\Omega)}^2 
+ (C_{1} + C_{2})h\sum_{j=0}^{m-1}\|v_{j}\|_{L^2(\Omega)}^2 
\end{align*}
and then there exist constants $C_{3} > 0$ and $h_{1} \in (0, h_{0})$ 
such that 
\begin{align*}
&\|\varphi_{m}\|_{L^2(\Omega)}^2 
+ \|v_{m}\|_{L^2(\Omega)}^2 
+ \|\widehat{\beta}(\varphi_{m})\|_{L^1(\Omega)} 
\notag \\ 
&+ \|\theta_{m}\|_{L^1(\Omega)} 
+ \|\ln\theta_{m}\|_{L^1(\Omega)} 
+ h\sum_{n=0}^{m-1}\|u_{n+1}\|_{H^1(\Omega)}^2 
\notag \\[2mm]
&\leq C_{3} + C_{3}h\sum_{j=0}^{m-1}\|\varphi_{j}\|_{L^2(\Omega)}^2 
+ C_{3}h\sum_{j=0}^{m-1}\|v_{j}\|_{L^2(\Omega)}^2 
\end{align*}
for all $h \in (0, h_{1})$ 
and $m = 1, ..., N$. 
Therefore, owing to the discrete Gronwall lemma (see e.g., \cite[Prop.\ 2.2.1]{Jerome}), 
there exists a constant $C_{4} > 0$ such that 
\begin{align*}
&\|\varphi_{m}\|_{L^2(\Omega)}^2 
+ \|v_{m}\|_{L^2(\Omega)}^2 
+ \|\widehat{\beta}(\varphi_{m})\|_{L^1(\Omega)} 
\notag \\ 
&+ \|\theta_{m}\|_{L^1(\Omega)} 
+ \|\ln\theta_{m}\|_{L^1(\Omega)} 
+ h\sum_{n=0}^{m-1}\|u_{n+1}\|_{H^1(\Omega)}^2 
\leq C_{4} 
\end{align*}
for all $h \in (0, h_{1})$ and $m = 1, ..., N$.  
\end{proof}

\begin{lem}\label{esth2}
Let $h_{1}$ be as in Lemma \ref{esth1}. 
Then there exist constants $h_{2} \in (0, h_{1})$ and $C>0$ depending on the data 
such that
\begin{align*}
\|\overline{\theta}_{h}\|_{L^{\infty}(0, T; L^2(\Omega))}^2 
+ \|\ln\overline{\theta}_{h}\|_{L^{\infty}(0, T; H^1(\Omega))}^2 
\leq C 
\end{align*}
for all $h \in (0, h_{2})$.  
\end{lem}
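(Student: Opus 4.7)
The plan is to test the first equation of \ref{Pn} by $h\theta_{n+1}$ in $L^2(\Omega)$. Two algebraic identities stemming from $\theta_{n+1} = -1/u_{n+1}$ drive the computation: $u_{n+1}\theta_{n+1} = -1$ (used on $\partial\Omega$) and
\[
\nabla u_{n+1}\cdot\nabla\theta_{n+1} \;=\; \frac{|\nabla\theta_{n+1}|^2}{\theta_{n+1}^2} \;=\; |\nabla\ln\theta_{n+1}|^2.
\]
Integration by parts combined with the Robin condition $\partial_\nu u_{n+1} + u_{n+1} = g_{n+1}$ converts the Laplacian contribution into $h\|\nabla\ln\theta_{n+1}\|_{L^2(\Omega)}^2 + h\int_{\partial\Omega}(-g_{n+1})\theta_{n+1} - h|\partial\Omega|$, where the boundary integral is nonnegative because $g_{n+1}\le 0$ a.e.\ on $\partial\Omega$ (from {\bf A4}) and $\theta_{n+1}>0$ a.e. The convexity inequality $(\theta_{n+1}-\theta_n)\theta_{n+1} \ge \tfrac12(\theta_{n+1}^2 - \theta_n^2)$ then yields
\[
\tfrac12\|\theta_{n+1}\|_{L^2}^2 - \tfrac12\|\theta_n\|_{L^2}^2 + h\|\nabla\ln\theta_{n+1}\|_{L^2}^2 + h\!\int_{\partial\Omega}(-g_{n+1})\theta_{n+1} \;\le\; h|\partial\Omega| + h(f_{n+1}-v_{n+1},\theta_{n+1})_{L^2}.
\]

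I handle the right-hand side by Young's inequality, using the $L^\infty(0,T;L^2)$ bound on $\overline{v}_h$ from Lemma~\ref{esth1} and the data assumption $f\in L^2(\Omega\times(0,T))$. Summing over $n=0,\dots,m-1$ and applying the discrete Gronwall lemma---after choosing $h_2\in(0,h_1)$ small enough to absorb the Gronwall coefficient on the left---yields the desired $L^\infty(0,T;L^2(\Omega))$ bound on $\overline{\theta}_h$ together with an auxiliary $L^2(0,T;L^2(\Omega))$ bound on $\nabla\ln\overline{\theta}_h$ and an $L^2(0,T;L^1(\partial\Omega))$ bound on $(-g)\,\overline{\theta}_h|_{\partial\Omega}$.

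To upgrade to $L^\infty(0,T;H^1(\Omega))$ for $\ln\overline{\theta}_h$, I note that the $L^\infty(0,T;L^1(\Omega))$ bound on $\ln\overline{\theta}_h$ from Lemma~\ref{esth1} controls the mean of $\ln\theta_m$, so via Poincar\'e--Wirtinger it suffices to bound $\|\nabla\ln\theta_m\|_{L^2}$ uniformly in $m$. For this I view the first equation of \ref{Pn} as a Robin boundary-value problem for $u_{n+1}$ with $L^2$ right-hand side, apply $H^2$-elliptic regularity, and then use $\nabla\ln\theta_{n+1} = -\nabla u_{n+1}/u_{n+1}$ together with the freshly established $L^\infty L^2$ bound on $\theta_m = -1/u_m$. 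The main obstacle is precisely this promotion from the $L^2$-in-time gradient bound supplied by the energy inequality to the $L^\infty$-in-time bound required by the statement; it requires careful pointwise handling of the singular ratio $1/u$ and will parallel the analogous step in \cite{K8}.
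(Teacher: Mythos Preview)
Your energy estimate---testing the first equation of \ref{Pn} by $h\theta_{n+1}$, using $u_{n+1}\theta_{n+1}=-1$ on $\partial\Omega$, $\nabla u_{n+1}\cdot\nabla\theta_{n+1}=|\nabla\ln\theta_{n+1}|^2$, the sign condition $g_{n+1}\le 0$, summing, and applying the discrete Gronwall lemma together with Lemma~\ref{esth1} and Poincar\'e--Wirtinger---is exactly the paper's argument (its \eqref{f1}--\eqref{f2}). Both you and the paper obtain from this the bound $\|\overline{\theta}_h\|_{L^\infty(0,T;L^2(\Omega))}$ and, for the logarithm, only $h\sum_{n=0}^{m-1}\|\nabla\ln\theta_{n+1}\|_{L^2(\Omega)}^2\le C$, i.e.\ $\ln\overline{\theta}_h\in L^2(0,T;H^1(\Omega))$. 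The paper stops there; it does \emph{not} carry out any upgrade to $L^\infty(0,T;H^1(\Omega))$, and in fact only the $L^\infty(0,T;L^2(\Omega))$ bound on $\overline{\theta}_h$ is used downstream (in Lemmas~\ref{esth4} and~\ref{esth9}). The $L^\infty$ exponent on $\ln\overline{\theta}_h$ in the stated lemma thus appears to be a slip for $L^2$.

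Your proposed upgrade step, however, has a genuine gap. Viewing the first equation of \ref{Pn} as an elliptic Robin problem for $u_{n+1}$, the right-hand side contains $(\theta_{n+1}-\theta_n)/h$, for which no $h$-uniform $L^2(\Omega)$ bound is available (the energy estimate controls only $\sum_n\|\theta_{n+1}-\theta_n\|_{L^2}^2$, not $h^{-1}$ times this, nor the individual terms). Hence $H^2$-elliptic regularity does not give a uniform bound on $\|u_{n+1}\|_{H^2(\Omega)}$. Even granting such a bound, the identity $\nabla\ln\theta_{n+1}=\theta_{n+1}\nabla u_{n+1}$ combined with $\theta_{n+1}\in L^2(\Omega)$ and $\nabla u_{n+1}\in H^1(\Omega)\hookrightarrow L^6(\Omega)$ places $\nabla\ln\theta_{n+1}$ only in $L^{3/2}(\Omega)$, not $L^2(\Omega)$, so the argument does not close. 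You should therefore drop the upgrade paragraph; the first half of your proof already matches the paper's and delivers what is actually needed.
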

\begin{proof}
Testing the first equation in \ref{Pn} by $h\theta_{n+1}$ 
leads to the identity 
\begin{align}\label{f1}
&\frac{1}{2}\|\theta_{n+1}\|_{L^2(\Omega)}^2 
- \frac{1}{2}\|\theta_{n}\|_{L^2(\Omega)}^2 
+ \frac{1}{2}\|\theta_{n+1} - \theta_{n}\|_{L^2(\Omega)}^2 
+ h(-\Delta u_{n+1}, \theta_{n+1})_{L^2(\Omega)} 
\notag \\ 
&= h(f_{n+1}, \theta_{n+1})_{L^2(\Omega)} - h(v_{n+1}, \theta_{n+1})_{L^2(\Omega)}. 
\end{align}
Here, since $u_{n+1} = - \frac{1}{\theta_{n+1}}$, $\theta_{n+1} > 0$ and $g_{n+1} \leq 0$,  
we have that 
\begin{align}\label{f2}
&h(-\Delta u_{n+1}, \theta_{n+1})_{L^2(\Omega)} 
\notag \\[3mm]
&= h\int_{\Omega} \nabla u_{n+1} \cdot \nabla \theta_{n+1} 
     + h\int_{\partial\Omega} u_{n+1}\theta_{n+1} 
     - h\int_{\partial\Omega} g_{n+1}\theta_{n+1} 
\notag \\ 
&\geq h\int_{\Omega} |\nabla\ln\theta_{n+1}|^2 - h|\partial\Omega|.  
\end{align}
Therefore we can verify that Lemma \ref{esth2} holds 
by combining \eqref{f1}, \eqref{f2}, 
by summing over $n = 0, ..., m-1$ with $1 \leq m \leq N$, 
by applying the discrete Gronwall lemma, 
by Lemma \ref{esth1} and the Poincar\'e--Wirtinger inequality.  
\end{proof}

\begin{lem}\label{esth3}
Let $h_{2}$ be as in Lemma \ref{esth2}. 
Then there exists a constant  $C>0$ depending on the data such that
\begin{align*}
\|(\widehat{\theta}_{h})_{t}\|_{L^2(0, T; {(H^1(\Omega))}^*)}
\leq C 
\end{align*}
for all $h \in (0, h_{2})$. 
\end{lem}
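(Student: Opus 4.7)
The plan is to read off a duality bound for $(\widehat{\theta}_h)_t$ directly from the first equation of \ref{Ph}. Given any test function $w \in H^1(\Omega)$, integration by parts combined with the Robin boundary condition yields the variational identity
\[
\bigl\langle (\widehat{\theta}_h)_t, w \bigr\rangle_{(H^1(\Omega))^{*}, H^1(\Omega)}
= (\overline{f}_h, w)_{L^2(\Omega)}
- (\overline{v}_h, w)_{L^2(\Omega)}
- \int_\Omega \nabla \overline{u}_h \cdot \nabla w
- \int_{\partial\Omega} (\overline{u}_h - \overline{g}_h)\, w,
\]
where I have used $(\widehat{\varphi}_h)_t = \overline{v}_h$.

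Next I would estimate the four terms on the right-hand side individually. The first two are handled by Cauchy--Schwarz, bounding them by $(\|\overline{f}_h\|_{L^2(\Omega)} + \|\overline{v}_h\|_{L^2(\Omega)})\|w\|_{L^2(\Omega)}$. The gradient term is dominated by $\|\overline{u}_h\|_{H^1(\Omega)}\|w\|_{H^1(\Omega)}$, and the boundary term is controlled via the trace inequality by a constant times $(\|\overline{u}_h\|_{H^1(\Omega)} + \|\overline{g}_h\|_{H^{1/2}(\partial\Omega)})\|w\|_{H^1(\Omega)}$. Combining these yields a pointwise-in-time estimate of the form
\[
\|(\widehat{\theta}_h)_t\|_{(H^1(\Omega))^{*}}
\leq C\bigl(\|\overline{f}_h\|_{L^2(\Omega)} + \|\overline{v}_h\|_{L^2(\Omega)}
+ \|\overline{u}_h\|_{H^1(\Omega)} + \|\overline{g}_h\|_{H^{1/2}(\partial\Omega)}\bigr).
\]

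Finally I would square both sides and integrate in $t$ over $(0,T)$. The four resulting $L^2(0,T)$-norms are all already under control: $\|\overline{f}_h\|_{L^2(0,T;L^2(\Omega))}$ and $\|\overline{g}_h\|_{L^2(0,T;H^{1/2}(\partial\Omega))}$ are bounded uniformly in $h$ by the standard properties of the averaging operators together with assumption \textbf{A4}, while Lemma \ref{esth1} supplies the uniform bounds on $\|\overline{v}_h\|_{L^\infty(0,T;L^2(\Omega))}$ and $\|\overline{u}_h\|_{L^2(0,T;H^1(\Omega))}$. Putting everything together delivers the claimed $L^2(0,T;(H^1(\Omega))^{*})$-bound. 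There is no real obstacle here; the only point requiring some care is to keep track of the trace of $\overline{u}_h$ on $\partial\Omega$ and absorb it into the $H^1(\Omega)$-bound via the continuity of the trace map, but this is routine.
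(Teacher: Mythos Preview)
Your argument is correct and is precisely the approach the paper indicates: the paper's proof is the single sentence ``This lemma can be obtained by the first equation in \ref{Ph} and Lemma \ref{esth1},'' and what you have written is exactly the natural unpacking of that sentence. The variational identity you derive from the first equation of \ref{Ph} together with the Robin condition is correct (recall $\overline{u}_h(t)\in H^2(\Omega)$ by Theorem \ref{maintheorem2}, so the integration by parts is justified), and the term-by-term estimates using Lemma \ref{esth1} and {\bf A4} are routine.
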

\begin{proof}
This lemma can be obtained 
by the first equation in \ref{Ph} and Lemma \ref{esth1}.  
\end{proof}

\begin{lem}\label{esth4}
Let $h_{2}$ be as in Lemma \ref{esth2}. 
Then there exists a constant  $C>0$ depending on the data such that
\begin{align*}
h\max_{1 \leq m \leq N}
\left\|
\sum_{n = 0}^{m-1}(-u_{n+1})
\right\|_{H^2(\Omega)}
\leq C 
\end{align*}
for all $h \in (0, h_{2})$.
\end{lem}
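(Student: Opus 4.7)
The plan is to telescope the first equation of \ref{Pn} so that $W_m := h\sum_{n=0}^{m-1}(-u_{n+1})$ appears as the solution of a single elliptic boundary value problem whose data are already controlled by Lemmas \ref{esth1} and \ref{esth2} together with the integrability of $f$ and $g$.

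First I would multiply the identity
\[
\tfrac{\theta_{n+1}-\theta_{n}}{h} + \tfrac{\varphi_{n+1}-\varphi_{n}}{h} - \Delta u_{n+1} = f_{n+1}
\]
by $h$ and sum over $n=0,\ldots,m-1$. Telescoping yields
\[
-\Delta W_m = (\theta_m-\theta_0) + (\varphi_m-\varphi_0) - h\sum_{n=0}^{m-1}f_{n+1} \quad \text{in } \Omega,
\]
and summing $\partial_\nu u_{n+1} + u_{n+1} = g_{n+1}$ in the same way produces
\[
\partial_\nu W_m + W_m = -h\sum_{n=0}^{m-1}g_{n+1} \quad \text{on } \partial\Omega.
\]
Since $hf_{n+1}=\int_{nh}^{(n+1)h}f(s)\,ds$ and similarly for $g$, the two sums are exactly $\int_0^{mh}f(s)\,ds$ and $\int_0^{mh}g(s)\,ds$.

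Next I would invoke standard $H^2$ regularity for the Robin Laplacian on the smooth bounded domain $\Omega$. Because the $+W_m$ term on the boundary makes the problem coercive, the resulting estimate is of the form
\[
\|W_m\|_{H^2(\Omega)} \le C\bigl(\|F_m\|_{L^2(\Omega)} + \|G_m\|_{H^{1/2}(\partial\Omega)}\bigr),
\]
with $F_m$ and $G_m$ denoting the above right-hand sides and $C$ depending only on $\Omega$.

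It remains to bound $F_m$ and $G_m$ uniformly in $m$ and $h$. The norm $\|\theta_m\|_{L^2(\Omega)}$ is controlled by Lemma \ref{esth2}, $\|\varphi_m\|_{L^2(\Omega)}$ by Lemma \ref{esth1}, while $\|\theta_0\|_{L^2(\Omega)}$ and $\|\varphi_0\|_{L^\infty(\Omega)}$ are finite by \textbf{A4}. For the data, Minkowski and Cauchy--Schwarz in time give
\[
\Bigl\|\int_0^{mh}f(s)\,ds\Bigr\|_{L^2(\Omega)} \le \sqrt{T}\,\|f\|_{L^2(0,T;L^2(\Omega))},
\]
and the analogous bound $\sqrt{T}\,\|g\|_{L^2(0,T;H^{1/2}(\partial\Omega))}$ for the boundary datum. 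Taking the maximum over $1\le m\le N$ concludes the proof. The only nontrivial ingredient is the elliptic regularity step; everything else is bookkeeping against previous lemmas, and no discrete Gronwall iteration is needed because telescoping collapses the accumulated time-increments of $\theta$ and $\varphi$ into a single endpoint difference that is already under control.
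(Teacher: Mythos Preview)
Your argument is correct and is precisely the approach the paper has in mind: the one-line proof in the paper just cites Lemmas \ref{esth1}, \ref{esth2} and ``the elliptic regularity theory (cf.\ \cite[Proof of Lemma 4.5]{K8})'', and what you have written is the natural unpacking of that reference---telescope the first equation of \ref{Pn}, obtain a single Robin problem for $W_m$, and apply $H^2$ regularity with the right-hand sides controlled by the previous lemmas and \textbf{A4}.
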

\begin{proof}
We can prove this lemma 
by Lemmas \ref{esth1}, \ref{esth2} 
and the elliptic regularity theory (cf.\ \cite[Proof of Lemma 4.5]{K8}). 
\end{proof}

\begin{lem}\label{esth5}
Let $h_{2}$ be as in Lemma \ref{esth2}. 
Then there exist constants $h_{3} \in (0, h_{2})$ and $C>0$ depending on the data 
such that
\begin{align*}
\|\overline{\varphi}_{h}\|_{L^{\infty}(\Omega\times(0, T))}^2 
+ \|\overline{v}_{h}\|_{L^{\infty}(\Omega\times(0, T))}^2 
\leq C 
\end{align*}
for all $h \in (0, h_{3})$. 
\end{lem}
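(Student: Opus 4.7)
The plan is to adapt to \ref{Pn} the pointwise Gronwall strategy sketched for problem \eqref{P2} in the introduction. Since the constraint $\theta_{n+1}>0$ forces $u_{n+1}=-1/\theta_{n+1}<0$, it is the positive quantity $-u_{n+1}$ that will play the role of the heat variable, and the discrete time primitive of $-u$ is controlled in $L^{\infty}(\Omega)$ through Lemma \ref{esth4} combined with the Sobolev embedding $H^{2}(\Omega)\hookrightarrow L^{\infty}(\Omega)$ (valid since $d\leq 3$); this yields
\[
\Bigl\|h\sum_{n=0}^{m-1}(-u_{n+1})\Bigr\|_{L^{\infty}(\Omega)}\leq C
\qquad (m=1,\dots,N,\ h\in(0,h_{2})).
\]

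First I would multiply the second equation of \ref{Pn} \emph{pointwise in $\Omega$} by $hv_{n+1}$. Using $hv_{n+1}=\varphi_{n+1}-\varphi_{n}$, the algebraic identity $(v_{n+1}-v_{n})v_{n+1}=\tfrac{1}{2}(|v_{n+1}|^{2}-|v_{n}|^{2})+\tfrac{1}{2}|v_{n+1}-v_{n}|^{2}$, and the subgradient inequality $\beta(\varphi_{n+1})(\varphi_{n+1}-\varphi_{n})\geq\widehat{\beta}(\varphi_{n+1})-\widehat{\beta}(\varphi_{n})$, summing on $n=0,\dots,m-1$ would give at every $x\in\Omega$
\begin{align*}
\tfrac{1}{2}|v_{m}(x)|^{2}+\widehat{\beta}(\varphi_{m}(x))
&\leq C_{0}+h\sum_{n=0}^{m-1}u_{n+1}(x)v_{n+1}(x)\\
&\quad -h\sum_{n=0}^{m-1}\pi(\varphi_{n+1})(x)v_{n+1}(x)-h\sum_{n=0}^{m-1}F_{n}(x)v_{n+1}(x),
\end{align*}
where $F_{n}:=a(\cdot)\varphi_{n}-J\ast\varphi_{n}$ and $C_{0}$ depends only on $\|v_{0}\|_{L^{\infty}(\Omega)}$, $\|\varphi_{0}\|_{L^{\infty}(\Omega)}$ and $\widehat{\beta}$.

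Next, setting $Y_{m}:=\max_{1\leq k\leq m}\|v_{k}\|_{L^{\infty}(\Omega)}$, the crucial linearization is the pointwise estimate
\[
\Bigl|h\sum_{n=0}^{m-1}u_{n+1}(x)v_{n+1}(x)\Bigr|\leq Y_{m}\cdot\Bigl\|h\sum_{n=0}^{m-1}(-u_{n+1})\Bigr\|_{L^{\infty}(\Omega)}\leq CY_{m},
\]
which uses precisely $-u_{n+1}>0$ together with the $H^{2}\hookrightarrow L^{\infty}$ bound above. For the $\pi$-term I would invoke the Lipschitz continuity of $\pi$ and Young's inequality; for the nonlocal term I would bound $\|F_{n}\|_{L^{\infty}(\Omega)}\leq C\|\varphi_{n}\|_{L^{\infty}(\Omega)}$ via \textbf{A1} followed by Young. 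Passing to $\sup_{x\in\Omega}$ on both sides and substituting $\|\varphi_{k}\|_{L^{\infty}(\Omega)}\leq\|\varphi_{0}\|_{L^{\infty}(\Omega)}+TY_{k}$, which follows from $\varphi_{k}=\varphi_{0}+h\sum_{j=1}^{k}v_{j}$, the inequality reduces to
\[
Y_{m}^{2}\leq C_{1}+C_{1}Y_{m}+C_{1}h\sum_{k=0}^{m}Y_{k}^{2}.
\]

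Finally, I would absorb $C_{1}Y_{m}$ into $\tfrac{1}{4}Y_{m}^{2}$ by Young, then pick $h_{3}\in(0,h_{2})$ so small that the $k=m$ term on the right is absorbed into the left-hand side, arriving at $Y_{m}^{2}\leq C+Ch\sum_{k=0}^{m-1}Y_{k}^{2}$, and conclude via the discrete Gronwall lemma that $Y_{m}\leq C$ uniformly in $m$ and $h\in(0,h_{3})$. The uniform $L^{\infty}$ bound on $\overline{\varphi}_{h}$ would then follow from $\|\varphi_{n}\|_{L^{\infty}(\Omega)}\leq\|\varphi_{0}\|_{L^{\infty}(\Omega)}+TY_{n}\leq C$. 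The heart of the proof, and where the argument for \eqref{P} crucially differs from that for \eqref{P1}, is obtaining a bound on $h\sum u_{n+1}v_{n+1}$ that is only \emph{linear} in $Y_{m}$; this linearization is available exactly because of the sign of $u_{n+1}$ combined with the uniform $L^{\infty}(\Omega)$ control of its discrete primitive provided by Lemma \ref{esth4}.
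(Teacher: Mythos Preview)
Your proof is correct and follows essentially the same route as the paper: a pointwise test of the second equation in \ref{Pn} by $hv_{n+1}$, the sign constraint $-u_{n+1}>0$ combined with Lemma~\ref{esth4} and the embedding $H^{2}(\Omega)\hookrightarrow L^{\infty}(\Omega)$ to linearize the coupling term, and the discrete Gronwall lemma. The only cosmetic differences are that the paper also tests $v_{n+1}=\frac{\varphi_{n+1}-\varphi_{n}}{h}$ by $h\varphi_{n+1}$ to carry $\tfrac{1}{2}|\varphi_{m}(x)|^{2}$ on the left-hand side (whereas you control $\|\varphi_{k}\|_{L^{\infty}}$ through $\varphi_{k}=\varphi_{0}+h\sum_{j=1}^{k}v_{j}$), and that the paper applies Gronwall first and absorbs the linear term $C\max_{m}\|v_{m}\|_{L^{\infty}}$ afterwards, while you absorb it by Young before Gronwall; both orderings work.
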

\begin{proof}
In reference to \cite[Proof of Lemma 4.6]{K8}, 
we can confirm that 
there exists a constant $C_{1} > 0$ such that 
\begin{align}\label{g1}
&\frac{1}{2}|\varphi_{m}(x)|^2 + \frac{1}{2}|v_{m}(x)|^2 
\notag \\[4mm] 
&\leq h\sum_{n=0}^{m-1} u_{n+1}(x)v_{n+1}(x) 
\notag \\ 
   &\,\quad + C_{1}h\sum_{n=0}^{m-1} \|\varphi_{n+1}\|_{L^{\infty}(\Omega)}^2 
         + C_{1}h\sum_{n=0}^{m-1} \|v_{n+1}\|_{L^{\infty}(\Omega)}^2 
         + C_{1} 
\end{align}
for all $h \in (0, h_{2})$ and for a.a.\ $x \in \Omega$, $m = 1, ..., N$. 
Here, noting that $- u_{j} > 0$ a.e.\ in $\Omega$ 
for $j = 0, 1, ..., N$, 
we deduce from Lemma \ref{esth4} 
and the continuity of the embedding $H^2(\Omega) \hookrightarrow L^{\infty}(\Omega)$ 
that there exists a constant $C_{2} >0$ such that 
\begin{align}\label{g2}
h\sum_{n=0}^{m-1} u_{n+1}(x)v_{n+1}(x)  
&= h\sum_{n=0}^{m-1}(-u_{n+1}(x))(-v_{n+1}(x)) 
\notag \\ 
&\leq \Bigl(\max_{1 \leq m \leq N}\|-v_{m}\|_{L^{\infty}(\Omega)}\Bigr) 
       h\sum_{n=0}^{m-1}(-u_{n+1}(x))
\notag \\
&\leq \Bigl(\max_{1 \leq m \leq N}\|v_{m}\|_{L^{\infty}(\Omega)}\Bigr) 
       h\left\|\sum_{n=0}^{m-1}(-u_{n+1})\right\|_{L^{\infty}(\Omega)} 
\notag \\ 
&\leq C_{2}\max_{1 \leq m \leq N}\|v_{m}\|_{L^{\infty}(\Omega)}
\end{align}
for all $h \in (0, h_{2})$ and for a.a.\ $x \in \Omega$, $m = 1, ..., N$. 
Thus we see from \eqref{g1} and \eqref{g2} that 
\begin{align*}
&\frac{1}{2}|\varphi_{m}(x)|^2 + \frac{1}{2}|v_{m}(x)|^2 
\notag \\
&\leq C_{2}\max_{1 \leq m \leq N}\|v_{m}\|_{L^{\infty}(\Omega)} 
+ C_{1}h\sum_{n=0}^{m-1} \|\varphi_{n+1}\|_{L^{\infty}(\Omega)}^2 
         + C_{1}h\sum_{n=0}^{m-1} \|v_{n+1}\|_{L^{\infty}(\Omega)}^2 
         + C_{1} 
\end{align*}
for a.a.\ $x \in \Omega$ and for all $h \in (0, h_{2})$, $m = 1, ..., N$,  
whence the inequality 
\begin{align*}
&\frac{1}{2}\|\varphi_{m}\|_{L^{\infty}(\Omega)}^2 
+ \frac{1}{2}\|v_{m}\|_{L^{\infty}(\Omega)}^2 
\notag \\ 
&\leq C_{2}\max_{1 \leq m \leq N}\|v_{m}\|_{L^{\infty}(\Omega)} 
+ C_{1}h\sum_{n=0}^{m-1} \|\varphi_{n+1}\|_{L^{\infty}(\Omega)}^2 
         + C_{1}h\sum_{n=0}^{m-1} \|v_{n+1}\|_{L^{\infty}(\Omega)}^2 
         + C_{1} 
\end{align*}
holds. 
Then there exist constants $h_{3} \in (0, h_{2})$ and $C_{3} > 0$ 
such that  
\begin{align*}
&\|\varphi_{m}\|_{L^{\infty}(\Omega)}^2 + \|v_{m}\|_{L^{\infty}(\Omega)}^2 
\notag \\  
&\leq C_{3}\max_{1 \leq m \leq N}\|v_{m}\|_{L^{\infty}(\Omega)} 
+ C_{3}h\sum_{j=0}^{m-1} \|\varphi_{j}\|_{L^{\infty}(\Omega)}^2 
         + C_{3}h\sum_{j=0}^{m-1} \|v_{j}\|_{L^{\infty}(\Omega)}^2 
         + C_{3} 
\end{align*}
for all $h \in (0, h_{3})$ and $m = 1, ..., N$. 
Hence by the discrete Gronwall lemma 
there exists a constant $C_{4} > 0$ such that 
\begin{align*}
\|\varphi_{m}\|_{L^{\infty}(\Omega)}^2 + \|v_{m}\|_{L^{\infty}(\Omega)}^2 
\leq C_{4} + C_{4}\max_{1 \leq m \leq N}\|v_{m}\|_{L^{\infty}(\Omega)} 
\end{align*}
for all $h \in (0, h_{3})$ and $m = 1, ..., N$. 
Therefore it holds that 
\begin{align*}
\max_{1 \leq m \leq N}\|\varphi_{m}\|_{L^{\infty}(\Omega)}^2 
+ \max_{1 \leq m \leq N}\|v_{m}\|_{L^{\infty}(\Omega)}^2 
&\leq C_{4} + C_{4}\max_{1 \leq m \leq N}\|v_{m}\|_{L^{\infty}(\Omega)} 
\notag \\ 
&\leq  C_{4} + \frac{1}{2}\max_{1 \leq m \leq N}\|v_{m}\|_{L^{\infty}(\Omega)}^2 
                 + \frac{C_{4}^2}{2},
\end{align*}
which leads to Lemma \ref{esth5}. 
\end{proof}

\begin{lem}\label{esth6}
Let $h_{3}$ be as in Lemma \ref{esth5}. 
Then there exists a constant  $C>0$ depending on the data such that 
\begin{align*}
\|\underline{\varphi}_{h}\|_{L^{\infty}(\Omega\times(0, T))}
\leq C 
\end{align*}
for all $h \in (0, h_{3})$. 
\end{lem}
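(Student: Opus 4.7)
The plan is to reduce this lemma directly to Lemma \ref{esth5} via the identity \eqref{tool7}, namely
\[
\underline{\varphi}_{h} = \overline{\varphi}_{h} - h(\widehat{\varphi}_{h})_{t} = \overline{\varphi}_{h} - h\overline{v}_{h}.
\]
Taking the $L^{\infty}(\Omega\times(0,T))$-norm of both sides and applying the triangle inequality yields
\[
\|\underline{\varphi}_{h}\|_{L^{\infty}(\Omega\times(0,T))}
\leq \|\overline{\varphi}_{h}\|_{L^{\infty}(\Omega\times(0,T))}
     + h\|\overline{v}_{h}\|_{L^{\infty}(\Omega\times(0,T))},
\]
and Lemma \ref{esth5} controls both terms on the right uniformly in $h\in(0,h_3)$ (with $h_3\leq 1$, so the factor $h$ is harmless).

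Alternatively, since $\underline{\varphi}_{h}$ takes the values $\varphi_0,\varphi_1,\dots,\varphi_{N-1}$ on the intervals $(nh,(n+1)h]$, one can write
\[
\|\underline{\varphi}_{h}\|_{L^{\infty}(\Omega\times(0,T))}
\leq \max\bigl\{\|\varphi_{0}\|_{L^{\infty}(\Omega)},\, \|\overline{\varphi}_{h}\|_{L^{\infty}(\Omega\times(0,T))}\bigr\},
\]
which is an analogue of \eqref{tool2} for the backward step interpolant. The first term is bounded by assumption \textbf{A4}, and the second by Lemma \ref{esth5}.

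There is essentially no obstacle here: the lemma is a direct and routine consequence of the already established $L^{\infty}$-bound on $\overline{\varphi}_{h}$ (and on $\overline{v}_{h}$, if one uses the first approach), combined with the elementary relation between the piecewise constant forward and backward interpolants. I would simply invoke \eqref{tool7} and Lemma \ref{esth5} in one short paragraph.
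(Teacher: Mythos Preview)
Your proof is correct, and your second approach---bounding $\|\underline{\varphi}_{h}\|_{L^{\infty}(\Omega\times(0,T))}$ by $\max\{\|\varphi_{0}\|_{L^{\infty}(\Omega)}, \|\overline{\varphi}_{h}\|_{L^{\infty}(\Omega\times(0,T))}\}$ and invoking {\bf A4} together with Lemma~\ref{esth5}---is exactly the paper's argument (the paper's one-line proof cites precisely {\bf A4} and Lemma~\ref{esth5}). Your first approach via \eqref{tool7} is an equally valid minor variant.
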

\begin{proof}
This lemma can be obtained by {\bf A4} and Lemma \ref{esth5}. 
\end{proof}

\begin{lem}\label{esth7}
Let $h_{3}$ be as in Lemma \ref{esth5}. 
Then there exists a constant  $C>0$ depending on the data such that 
\begin{align*}
\|\beta(\overline{\varphi}_{h})\|_{L^{\infty}(\Omega\times(0, T))}
\leq C 
\end{align*}
for all $h \in (0, h_{3})$. 
\end{lem}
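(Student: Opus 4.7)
The plan is to deduce this lemma directly from Lemma \ref{esth5} together with the local Lipschitz (in particular, continuous) character of $\beta$ assumed in {\bf A2}.

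First, I would invoke Lemma \ref{esth5} to obtain a constant $C_{1} > 0$, independent of $h \in (0, h_{3})$, such that
\[
\|\overline{\varphi}_{h}\|_{L^{\infty}(\Omega\times(0,T))} \leq C_{1}.
\]
Consequently, for each $h \in (0, h_{3})$, the function $\overline{\varphi}_{h}(x,t)$ takes values almost everywhere in the compact interval $[-C_{1}, C_{1}] \subset \mathbb{R}$.

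Next, since {\bf A2} asserts that $\beta : \mathbb{R} \to \mathbb{R}$ is (single-valued and) locally Lipschitz continuous, $\beta$ is in particular continuous on the compact set $[-C_{1}, C_{1}]$. Hence
\[
C_{2} := \sup_{r \in [-C_{1}, C_{1}]} |\beta(r)| < +\infty,
\]
where $C_{2}$ depends only on $\beta$ and $C_{1}$, and thus only on the data. I would then conclude that
\[
|\beta(\overline{\varphi}_{h}(x,t))| \leq C_{2} \quad \mbox{for a.a.\ } (x,t) \in \Omega \times (0,T),
\]
uniformly in $h \in (0, h_{3})$, which is the claimed estimate.

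There is no real obstacle here: the entire content of the lemma is to exploit the $L^{\infty}$-bound on $\overline{\varphi}_{h}$ already obtained in Lemma \ref{esth5}, together with the continuity of $\beta$ on bounded intervals guaranteed by the local Lipschitz hypothesis in {\bf A2}. The only subtlety worth emphasizing is that the constant $C$ in the statement must depend on the data \emph{including} the behavior of $\beta$ on $[-C_{1}, C_{1}]$; this is consistent with the phrasing ``depending on the data''.
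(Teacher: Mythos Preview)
Your proof is correct and follows essentially the same approach as the paper's own argument, which simply appeals to the continuity of $\beta$ together with Lemma~\ref{esth5}. You have merely made explicit the straightforward details.
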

\begin{proof}
We can prove this lemma by the continuity of $\beta$ and Lemma \ref{esth5}. 
\end{proof}

\begin{lem}\label{esth8}
Let $h_{3}$ be as in Lemma \ref{esth5}. 
Then there exists a constant  $C>0$ depending on the data such that 
\begin{align*}
\|\overline{z}_{h}\|_{L^2(0, T; L^2(\Omega))}
\leq C 
\end{align*}
for all $h \in (0, h_{3})$. 
\end{lem}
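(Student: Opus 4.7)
The plan is to solve the second equation in \ref{Ph} algebraically for $\overline{z}_h$ and then control each resulting term in $L^2(0,T;L^2(\Omega))$ using the estimates already established in Lemmas \ref{esth1} through \ref{esth7}. Concretely, I would write
\[
\overline{z}_h
= \overline{u}_h - \overline{v}_h
  - a(\cdot)\underline{\varphi}_h + J\ast\underline{\varphi}_h
  - \beta(\overline{\varphi}_h) - \pi(\overline{\varphi}_h)
\quad \mbox{a.e.\ in}\ \Omega\times(0,T),
\]
and then apply the triangle inequality.

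For the first term, Lemma \ref{esth1} gives a bound on $\overline{u}_h$ in $L^2(0,T;H^1(\Omega))$, which embeds continuously into $L^2(0,T;L^2(\Omega))$. For the second term, Lemma \ref{esth5} bounds $\overline{v}_h$ in $L^\infty(\Omega\times(0,T))$, which obviously controls its $L^2(0,T;L^2(\Omega))$-norm on a bounded time interval and a bounded domain. The terms $a(\cdot)\underline{\varphi}_h$ and $J\ast\underline{\varphi}_h$ are handled using assumption {\bf A1}, which guarantees $\|a\|_{L^\infty(\Omega)}<+\infty$ and the Young-type inequality $\|J\ast\underline{\varphi}_h\|_{L^\infty(\Omega)}\leq (\sup_x \int_\Omega |J(x-y)|\,dy)\|\underline{\varphi}_h\|_{L^\infty(\Omega)}$, combined with the $L^\infty$-bound for $\underline{\varphi}_h$ in Lemma \ref{esth6}. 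The term $\beta(\overline{\varphi}_h)$ is bounded in $L^\infty(\Omega\times(0,T))$ directly by Lemma \ref{esth7}. Finally, for $\pi(\overline{\varphi}_h)$ I would use the Lipschitz continuity of $\pi$ from {\bf A3} together with the $L^\infty$-bound for $\overline{\varphi}_h$ from Lemma \ref{esth5}, writing $|\pi(\overline{\varphi}_h)|\leq |\pi(0)|+\|\pi'\|_{L^\infty(\mathbb{R})}|\overline{\varphi}_h|$.

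I do not foresee any real obstacle: this lemma is essentially a direct consequence of the previous uniform estimates, and the argument is just a bookkeeping exercise of recognizing that every term in the right-hand side of the identity defining $\overline{z}_h$ has already been uniformly estimated — either in $L^2(0,T;L^2(\Omega))$ or in the stronger norm $L^\infty(\Omega\times(0,T))$ — on the interval $(0,h_3)$.
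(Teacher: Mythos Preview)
Your proposal is correct and follows essentially the same approach as the paper's proof, which simply cites the second equation in \ref{Ph}, Lemmas \ref{esth1}, \ref{esth5}--\ref{esth7}, and conditions {\bf A1}, {\bf A3}. You have merely unpacked this terse reference into the explicit term-by-term estimate, which is exactly what the paper intends.
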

\begin{proof}
We can verify that this lemma holds 
by the second equation in \ref{Ph}, 
Lemmas \ref{esth1}, \ref{esth5}-\ref{esth7}, 
the conditions {\bf A1}, {\bf A3}. 
\end{proof}

\begin{lem}\label{esth9}
Let $h_{3}$ be as in Lemma \ref{esth5}. 
Then there exists a constant  $C>0$ depending on the data such that 
\begin{align*}
&
\|\widehat{\theta}_{h}\|_{H^1(0, T; {(H^1(\Omega))}^*) \cap L^{\infty}(0, T; L^2(\Omega))} 
+ \|\widehat{v}_{h}\|_{H^1(0, T; L^2(\Omega)) \cap L^{\infty}(\Omega\times(0, T))} 
+ \|\widehat{\varphi}_{h}\|_{W^{1, \infty}(0, T; L^{\infty}(\Omega))} 
\notag \\
&
\leq C 
\end{align*}
for all $h \in (0, h_{3})$. 
\end{lem}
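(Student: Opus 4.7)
The proof is a routine bookkeeping exercise that assembles Lemma~\ref{esth9} from the piecewise-linear/constant interpolation identities \eqref{tool1}--\eqref{tool6} together with the estimates already obtained in Lemmas~\ref{esth1}--\ref{esth8}. I would split the argument into three blocks, one for each of $\widehat{\theta}_h$, $\widehat{v}_h$, and $\widehat{\varphi}_h$.

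For $\widehat{\theta}_h$, the plan is to read off the $L^\infty(0,T;L^2(\Omega))$-bound directly from \eqref{tool1}: the right-hand side is the maximum of $\|\theta_0\|_{L^2(\Omega)}$ (finite by \textbf{A4}) and $\|\overline{\theta}_h\|_{L^\infty(0,T;L^2(\Omega))}$ (controlled by Lemma~\ref{esth2}). For the $H^1(0,T;(H^1(\Omega))^\ast)$ component, the $L^2$-in-time norm of $\widehat{\theta}_h$ itself is absorbed by the preceding $L^\infty(0,T;L^2)$-bound (via the continuous embedding $L^2(\Omega)\hookrightarrow (H^1(\Omega))^\ast$ and integration over $(0,T)$), and the norm of the time derivative is exactly what Lemma~\ref{esth3} provides.

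For $\widehat{v}_h$, I would first note \eqref{tool3}: the $L^\infty(\Omega\times(0,T))$-bound reduces to controlling $\|v_0\|_{L^\infty(\Omega)}$ by \textbf{A4} and $\|\overline{v}_h\|_{L^\infty(\Omega\times(0,T))}$ by Lemma~\ref{esth5}. For the $H^1(0,T;L^2(\Omega))$-part, the $L^2(0,T;L^2(\Omega))$-norm is dominated by $T^{1/2}|\Omega|^{1/2}\|\widehat{v}_h\|_{L^\infty(\Omega\times(0,T))}$ (already bounded), while the derivative identity $(\widehat{v}_h)_t=\overline{z}_h$ combined with Lemma~\ref{esth8} handles the $L^2(0,T;L^2(\Omega))$-norm of the derivative. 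The treatment of $\widehat{\varphi}_h$ is entirely analogous: use \eqref{tool2} together with \textbf{A4} and Lemma~\ref{esth5} for the $L^\infty(0,T;L^\infty(\Omega))$-norm, and use $(\widehat{\varphi}_h)_t=\overline{v}_h$ together with Lemma~\ref{esth5} again for the $L^\infty(0,T;L^\infty(\Omega))$-norm of the time derivative.

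There is no genuine obstacle here; every ingredient is already in place. The only point worth double-checking while writing it up is that the right-hand sides of \eqref{tool1}--\eqref{tool3} really are equalities, so that no additional step-size dependent factor sneaks in, and that the embedding $L^2(\Omega)\hookrightarrow (H^1(\Omega))^\ast$ is applied with a constant independent of $h$ when dealing with $\widehat{\theta}_h$. Once these are noted, the estimate of Lemma~\ref{esth9} follows by taking $C$ to be the sum of the constants produced by the three blocks above, valid for all $h\in(0,h_3)$.
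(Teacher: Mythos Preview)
Your proposal is correct and follows essentially the same route as the paper, which simply cites Lemmas \ref{esth2}, \ref{esth3}, \ref{esth5}, \ref{esth8} together with \eqref{tool1}--\eqref{tool3}. Your write-up merely makes explicit the bookkeeping (the embedding $L^2(\Omega)\hookrightarrow (H^1(\Omega))^\ast$ and the derivative identities $(\widehat{v}_h)_t=\overline{z}_h$, $(\widehat{\varphi}_h)_t=\overline{v}_h$) that the paper leaves implicit.
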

\begin{proof}
Lemmas \ref{esth2}, \ref{esth3}, \ref{esth5}, \ref{esth8}, 
along with \eqref{tool1}-\eqref{tool3}, 
lead to Lemma \ref{esth9}. 
\end{proof}

\vspace{10pt}

\section{Existence for \eqref{P} and error estimate}\label{Sec5}

In this section we will derive existence and uniqueness of solutions to \eqref{P} 
by passing to the limit in \ref{Ph} as $h \searrow 0$  
and will establish an error estimate 
between solutions of \eqref{P} and solutions of \ref{Ph}. 
\begin{lem}\label{LemC1}
Let $h_{3}$ be as in Lemma \ref{esth5}. 
Then there exists a constant  $M_{1} > 0$ depending on the data such that 
\begin{align}\label{C1}
&\|(1\star(\overline{u}_{h} - \overline{u}_{\tau}))(t)\|_{H^1(\Omega)}^2 
\notag \\[3mm]
&\leq M_{1}(h + \tau) 
         + M_{1}\int_{0}^{t} 
                 \|(1\star(\overline{u}_{h} - \overline{u}_{\tau}))(s)\|_{H^1(\Omega)}^2\,ds 
         + M_{1}\int_{0}^{t} \|\widehat{v}_{h}(s) - \widehat{v}_{\tau}(s)\|_{L^2(\Omega)}^2\,ds
\notag \\
&\,\quad + M_{1}\|\overline{f}_{h} - \overline{f}_{\tau}\|_{L^2(0, T; L^2(\Omega))}^2 
           + M_{1}\|\overline{g}_{h} - \overline{g}_{\tau}\|_{L^2(0, T; L^2(\partial\Omega))}^2 
\end{align}
for all $h, \tau \in (0, h_{3})$ and all $t \in [0, T]$. 
\end{lem}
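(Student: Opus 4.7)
The plan is to subtract the first equation of \ref{Ph} at step sizes $h$ and $\tau$, integrate it in time from $0$ to $s$, and define
$\widetilde{W}(s):=(\widehat{\theta}_h-\widehat{\theta}_\tau)(s)$,
$\widetilde{V}(s):=(\widehat{\varphi}_h-\widehat{\varphi}_\tau)(s)$,
$\widetilde{U}(s):=(1\star(\overline{u}_h-\overline{u}_\tau))(s)$,
$\widetilde{F}:=1\star(\overline{f}_h-\overline{f}_\tau)$ and $\widetilde{G}:=1\star(\overline{g}_h-\overline{g}_\tau)$. Since $\widetilde{W}(0)=\widetilde{V}(0)=\widetilde{U}(0)=0$, the resulting weak form, valid for a.e.\ $s\in(0,T)$ and every $w\in H^1(\Omega)$, reads
\[
\langle \widetilde{W}(s),w\rangle_{(H^1(\Omega))^{*},H^1(\Omega)}
+ (\widetilde{V}(s),w)_{L^2(\Omega)}
+ \int_\Omega \nabla\widetilde{U}(s)\cdot\nabla w
+ \int_{\partial\Omega}\widetilde{U}(s)w
= (\widetilde{F}(s),w)_{L^2(\Omega)} + \int_{\partial\Omega}\widetilde{G}(s)w.
\]
I would then choose $w=\overline{u}_h(s)-\overline{u}_\tau(s)=\widetilde{U}_s(s)$ and integrate in $s$ over $(0,t)$. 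The elliptic contributions telescope into $\tfrac12\bigl(\|\nabla\widetilde{U}(t)\|_{L^2(\Omega)}^2+\|\widetilde{U}(t)\|_{L^2(\partial\Omega)}^2\bigr)$, which by \eqref{e7} controls $c_{*}\|\widetilde{U}(t)\|_{H^1(\Omega)}^2$, the quantity targeted by \eqref{C1}.

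The crucial term is $\int_0^t\int_\Omega \widetilde{W}(\overline{u}_h-\overline{u}_\tau)$. I split it as
\[
\widetilde{W} = (\overline{\theta}_h-\overline{\theta}_\tau) + \bigl[(\widehat{\theta}_h-\overline{\theta}_h) - (\widehat{\theta}_\tau-\overline{\theta}_\tau)\bigr].
\]
Since $\overline{\theta}_j=-1/\overline{u}_j$ with $\overline{\theta}_j>0$ (i.e.\ $\overline{u}_j<0$), the map $\overline{u}\mapsto-1/\overline{u}$ is strictly monotone increasing, so the contribution $\int_0^t\int_\Omega(\overline{\theta}_h-\overline{\theta}_\tau)(\overline{u}_h-\overline{u}_\tau)$ is nonnegative and can be kept on the left-hand side (or simply discarded). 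The two remaining defect pieces are estimated by duality: by \eqref{tool4} and Lemma \ref{esth3} one has $\|\widehat{\theta}_h-\overline{\theta}_h\|_{L^2(0,T;(H^1(\Omega))^{*})}\le Ch$, while Lemma \ref{esth1} supplies $\|\overline{u}_h-\overline{u}_\tau\|_{L^2(0,T;H^1(\Omega))}\le C$, so these terms are absorbed into an $M(h+\tau)$ remainder.

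The $\widetilde{V}$ term and the source/boundary data terms are handled by integration by parts in time, trading $\widetilde{U}_s$ for $\widetilde{U}$. Using $\widetilde{V}_t=\overline{v}_h-\overline{v}_\tau$, the identity \eqref{tool6} together with Lemma \ref{esth8} allows me to replace each $\overline{v}_j$ by $\widehat{v}_j$ modulo an $O(h+\tau)$ error, producing precisely the integral $\int_0^t\|\widehat{v}_h-\widehat{v}_\tau\|_{L^2(\Omega)}^2$ appearing in \eqref{C1}. The boundary-at-time-$t$ contributions generated by integration by parts, namely $(\widetilde{V}(t),\widetilde{U}(t))_{L^2(\Omega)}$, $(\widetilde{F}(t),\widetilde{U}(t))_{L^2(\Omega)}$ and $(\widetilde{G}(t),\widetilde{U}(t))_{L^2(\partial\Omega)}$, are handled by Young's inequality with a small constant $\varepsilon$, absorbing $\varepsilon\|\widetilde{U}(t)\|_{H^1(\Omega)}^2$ into the left-hand side; the estimate $\|\widetilde{V}(t)\|_{L^2(\Omega)}^2\le C\int_0^t\|\widehat{v}_h-\widehat{v}_\tau\|_{L^2(\Omega)}^2+C(h+\tau)$ follows from $\widetilde{V}(t)=\int_0^t(\overline{v}_h-\overline{v}_\tau)$, Cauchy--Schwarz and \eqref{tool6}, while $\|\widetilde{F}(t)\|_{L^2(\Omega)}^2$ and $\|\widetilde{G}(t)\|_{L^2(\partial\Omega)}^2$ are bounded by $T$ times the corresponding $L^2(L^2)$-norm of $\overline{f}_h-\overline{f}_\tau$ and $\overline{g}_h-\overline{g}_\tau$, respectively.

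The main obstacle is the mismatch between the piecewise linear interpolant $\widehat{\theta}_h$, whose time derivative appears in \ref{Ph}, and the piecewise constant interpolant $\overline{\theta}_h=-1/\overline{u}_h$, which alone carries the monotonicity with respect to $\overline{u}_h$. The splitting above, combined with the quantitative error \eqref{tool4} and the a priori $(H^1(\Omega))^{*}$-bound of Lemma \ref{esth3}, is what allows the monotonicity argument to survive in the discrete-in-time setting; all other steps are standard Young/Cauchy--Schwarz reshuffling, after which one reads off \eqref{C1} with a constant $M_1>0$ depending only on the data.
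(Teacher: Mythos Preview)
Your proposal is correct and follows essentially the same approach as the paper: you derive the time-integrated difference identity, test with $\overline{u}_h-\overline{u}_\tau=\widetilde{U}_s$, exploit the monotonicity of $u\mapsto-1/u$ after splitting off the defect $\widehat{\theta}_j-\overline{\theta}_j$ (controlled via \eqref{tool4} and Lemma~\ref{esth3}), and integrate by parts in time the $\widetilde{V}$, $\widetilde{F}$, $\widetilde{G}$ terms, replacing $\overline{v}_j$ by $\widehat{v}_j$ through \eqref{tool6} and Lemma~\ref{esth8}. The paper's proof is organized around the same identities \eqref{h2}--\eqref{h6} and invokes the same auxiliary results.
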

\begin{proof}
We have that 
\begin{align}\label{h1}
&(\widehat{\theta}_{h} - \widehat{\theta}_{\tau}, w)_{L^2(\Omega)} 
+ (\widehat{\varphi}_{h} - \widehat{\varphi}_{\tau}, w)_{L^2(\Omega)} 
+ \int_{\Omega} \nabla (1\star(\overline{u}_{h} - \overline{u}_{\tau})) \cdot \nabla w 
\notag \\ 
& + \int_{\partial\Omega} (1\star(\overline{u}_{h} - \overline{u}_{\tau}))w 
= ((1\star(\overline{f}_{h} - \overline{f}_{\tau})), w)_{L^2(\Omega)} 
   + \int_{\partial\Omega} (1\star(\overline{g}_{h} - \overline{g}_{\tau}))w 
\notag \\[6mm]  
&\hspace{75mm} \mbox{a.e.\ in}\ (0, T) \ \  \mbox{for all}\ w \in H^1(\Omega).
\end{align}
We take $w = \overline{u}_{h} - \overline{u}_{\tau}$ in \eqref{h1} 
and integrate over $(0, t)$, where $t \in [0, T]$, to infer that 
\begin{align}\label{h2}
&\int_{0}^{t} (\widehat{\theta}_{h}(s) - \widehat{\theta}_{\tau}(s), 
                                    \overline{u}_{h}(s) - \overline{u}_{\tau}(s))_{L^2(\Omega)}\,ds 
+ \int_{0}^{t} (\widehat{\varphi}_{h}(s) - \widehat{\varphi}_{\tau}(s), 
                                    \overline{u}_{h}(s) - \overline{u}_{\tau}(s))_{L^2(\Omega)}\,ds 
\notag \\ 
&+ \frac{1}{2}\|\nabla(1\star(\overline{u}_{h} - \overline{u}_{\tau}))(s)\|_{L^2(\Omega)}^2 
+ \frac{1}{2}\|(1\star(\overline{u}_{h} - \overline{u}_{\tau}))(s)\|_{L^2(\partial\Omega)}^2 
\notag \\[5mm]  
&= 
\int_{0}^{t} ((1\star(\overline{f}_{h} - \overline{f}_{\tau}))(s),  
                         (1\star(\overline{u}_{h} - \overline{u}_{\tau}))(s))_{L^2(\Omega)}\,ds 
\notag \\[2mm]
&\,\quad + \int_{0}^{t}\left(\int_{\partial\Omega} 
                  (1\star(\overline{g}_{h} - \overline{g}_{\tau}))(s)
                   (1\star(\overline{u}_{h} - \overline{u}_{\tau}))(s)
                  \right)\,ds. 
\end{align}
Here we see from the identity $\overline{\theta}_{h} = - \frac{1}{\overline{u}_{h}}$ 
that 
\begin{align}\label{h3}
&\int_{0}^{t} (\widehat{\theta}_{h}(s) - \widehat{\theta}_{\tau}(s), 
                                    \overline{u}_{h}(s) - \overline{u}_{\tau}(s))_{L^2(\Omega)}\,ds 
\notag \\[4mm] 
&= \int_{0}^{t} \left\langle 
                      \widehat{\theta}_{h}(s) - \overline{\theta}_{h}(s), 
                                       \overline{u}_{h}(s) - \overline{u}_{\tau}(s) 
                   \right\rangle_{{(H^1(\Omega))}^*, H^1(\Omega)}\,ds 
\notag \\ 
&\,\quad + \int_{0}^{t} \left\langle 
                                 \overline{\theta}_{\tau}(s) - \widehat{\theta}_{\tau}(s), 
                                                             \overline{u}_{h}(s) - \overline{u}_{\tau}(s)
                              \right\rangle_{{(H^1(\Omega))}^*, H^1(\Omega)}\,ds 
\notag \\ 
&\,\quad + \int_{0}^{t}(\alpha(\overline{u}_{h}(s)) - \alpha(\overline{u}_{\tau}(s)), 
                                   \overline{u}_{h}(s) - \overline{u}_{\tau}(s))_{L^2(\Omega)}\,ds 
\notag \\[3mm] 
&\geq \int_{0}^{t} \left\langle 
                      \widehat{\theta}_{h}(s) - \overline{\theta}_{h}(s), 
                                       \overline{u}_{h}(s) - \overline{u}_{\tau}(s) 
                   \right\rangle_{{(H^1(\Omega))}^*, H^1(\Omega)}\,ds 
\notag \\ 
&\,\quad + \int_{0}^{t} \left\langle 
                                 \overline{\theta}_{\tau}(s) - \widehat{\theta}_{\tau}(s), 
                                                             \overline{u}_{h}(s) - \overline{u}_{\tau}(s)
                              \right\rangle_{{(H^1(\Omega))}^*, H^1(\Omega)}\,ds,  
\end{align}
where $\alpha(r) := - \frac{1}{r}$ 
for $r \in D(\alpha) := \{r \in \mathbb{R} \ |\ r < 0 \}$ 
and the monotonicity of $\alpha$ was used.  
Integrating by parts with respect to time yields that 
\begin{align}\label{h4}
&\int_{0}^{t} (\widehat{\varphi}_{h}(s) - \widehat{\varphi}_{\tau}(s), 
                                    \overline{u}_{h}(s) - \overline{u}_{\tau}(s))_{L^2(\Omega)}\,ds 
\notag \\[3mm]
&= \int_{0}^{t} ((1\star(\overline{v}_{h} - \overline{v}_{\tau}))(s), 
                           (1\star(\overline{u}_{h} - \overline{u}_{\tau}))'(s))_{L^2(\Omega)}\,ds 
\notag \\[4mm] 
&= ((1\star(\overline{v}_{h} - \overline{v}_{\tau}))(t), 
                                  (1\star(\overline{u}_{h} - \overline{u}_{\tau}))(t))_{L^2(\Omega)} 
\notag \\
&\,\quad - \int_{0}^{t} (\overline{v}_{h}(s) - \overline{v}_{\tau}(s), 
                         (1\star(\overline{u}_{h} - \overline{u}_{\tau}))(s))_{L^2(\Omega)}\,ds.  
\end{align}
Also, it holds that 
\begin{align}\label{h5}
&\int_{0}^{t} ((1\star(\overline{f}_{h} - \overline{f}_{\tau}))(s), 
                                    \overline{u}_{h}(s) - \overline{u}_{\tau}(s))_{L^2(\Omega)}\,ds 
\notag \\[4mm] 
&= \int_{0}^{t} ((1\star(\overline{f}_{h} - \overline{f}_{\tau}))(s),  
                            (1\star(\overline{u}_{h} - \overline{u}_{\tau}))'(s))_{L^2(\Omega)}\,ds 
\notag \\[4mm] 
&= ((1\star(\overline{f}_{h} - \overline{f}_{\tau}))(t), 
                                  (1\star(\overline{u}_{h} - \overline{u}_{\tau}))(t))_{L^2(\Omega)} 
\notag \\
&\,\quad - \int_{0}^{t} (\overline{f}_{h}(s) - \overline{f}_{\tau}(s), 
                         (1\star(\overline{u}_{h} - \overline{u}_{\tau}))(s))_{L^2(\Omega)}\,ds 
\end{align}
and 
\begin{align}\label{h6}
&\int_{0}^{t} \left(\int_{\partial\Omega}
                  (1\star(\overline{g}_{h} - \overline{g}_{\tau}))(s)
                                    (\overline{u}_{h}(s) - \overline{u}_{\tau}(s))\right)\,ds 
\notag \\[5mm] 
&= \int_{0}^{t} \left(\int_{\partial\Omega}
                  (1\star(\overline{g}_{h} - \overline{g}_{\tau}))(s)
                         (1\star(\overline{u}_{h} - \overline{u}_{\tau}))'(s)\right)\,ds 
\notag \\[4mm] 
&= \int_{\partial\Omega}
                  (1\star(\overline{g}_{h} - \overline{g}_{\tau}))(t)
                         (1\star(\overline{u}_{h} - \overline{u}_{\tau}))(t)
\notag \\
&\,\quad - \int_{0}^{t}\left(\int_{\partial\Omega}
                  (\overline{g}_{h} - \overline{g}_{\tau})(s)
                         (1\star(\overline{u}_{h} - \overline{u}_{\tau}))(s)\right)\,ds.  
\end{align}
Therefore, 
since 
$\overline{v}_{h} - \overline{v}_{\tau} 
= \overline{v}_{h} - \widehat{v}_{h} + \widehat{v}_{\tau} - \overline{v}_{\tau}
   + \widehat{v}_{h} - \widehat{v}_{\tau}$, 
we can prove Lemma \ref{LemC1} 
by \eqref{h2}-\eqref{h6}, the Schwarz inequality, 
the Young inequality, 
\eqref{tool4}, \eqref{tool6}, 
Lemmas \ref{esth1}, \ref{esth3}, \ref{esth8}. 
\end{proof}

\begin{lem}\label{LemC2}
Let $h_{3}$ be as in Lemma \ref{esth5}. 
Then there exists a constant  $M_{2} > 0$ depending on the data such that 
\begin{align}\label{C2}
&\|\widehat{\varphi}_{h}(t) - \widehat{\varphi}_{\tau}(t)\|_{L^2(\Omega)}^2 
  + \|\widehat{v}_{h}(t) - \widehat{v}_{\tau}(t)\|_{L^2(\Omega)}^2 
\notag \\[3mm]
&\leq M_{2}(h + \tau) 
        + M_{2}\int_{0}^{t} 
              \|\widehat{\varphi}_{h}(s) - \widehat{\varphi}_{\tau}(s)\|_{L^2(\Omega)}^2\,ds
        + M_{2}\int_{0}^{t} \|\widehat{v}_{h}(s) - \widehat{v}_{\tau}(s)\|_{L^2(\Omega)}^2\,ds 
\notag \\
&\,\quad + M_{2}\|(1\star(\overline{u}_{h} - \overline{u}_{\tau}))(t)\|_{H^1(\Omega)}^2 
\end{align}
for all $h, \tau \in (0, h_{3})$ and all $t \in [0, T]$.  
\end{lem}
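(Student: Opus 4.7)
The plan is to integrate the difference of the second equation of \ref{Ph} at the two discretization scales $h$ and $\tau$ over $(0, t)$, and then to express $\widehat{v}_{h}(t) - \widehat{v}_{\tau}(t)$ as an explicit algebraic combination of the other pieces, bypassing any testing procedure. Subtracting the second line of \ref{Ph} for the two schemes yields
\begin{equation*}
(\widehat{v}_{h} - \widehat{v}_{\tau})_{t} + (\overline{v}_{h} - \overline{v}_{\tau}) + a(\cdot)(\underline{\varphi}_{h} - \underline{\varphi}_{\tau}) - J\ast(\underline{\varphi}_{h} - \underline{\varphi}_{\tau}) + [\beta(\overline{\varphi}_{h}) - \beta(\overline{\varphi}_{\tau})] + [\pi(\overline{\varphi}_{h}) - \pi(\overline{\varphi}_{\tau})] = \overline{u}_{h} - \overline{u}_{\tau}.
\end{equation*}
Integrating from $0$ to $t$, using $\widehat{v}_{h}(0) = \widehat{v}_{\tau}(0) = v_{0}$, $\widehat{\varphi}_{h}(0) = \widehat{\varphi}_{\tau}(0) = \varphi_{0}$, and the identity $\overline{v}_{\bullet} = (\widehat{\varphi}_{\bullet})_{t}$ from \ref{Ph}, the first two summands telescope to $\widehat{v}_{h}(t) - \widehat{v}_{\tau}(t)$ and $\widehat{\varphi}_{h}(t) - \widehat{\varphi}_{\tau}(t)$, while the right-hand side becomes $(1\star(\overline{u}_{h} - \overline{u}_{\tau}))(t)$.

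Taking $L^{2}(\Omega)$-norms, squaring, and using Minkowski's inequality $\|(1\star F)(t)\|_{L^2} \le \int_{0}^{t} \|F(s)\|_{L^{2}}\,ds$ together with Cauchy--Schwarz in time on each of the three remaining $(1\star\,\cdot\,)(t)$ terms, I reduce to bounding $\|F(s)\|_{L^2}$. For the nonlocal piece, {\bf A1} combined with Young's convolution inequality gives $\|a\underline{\varphi} - J\ast\underline{\varphi}\|_{L^2} \le C\|\underline{\varphi}\|_{L^2}$; for the $\beta$-piece, the local Lipschitz continuity in {\bf A2} applied on the bounded $L^{\infty}$-ball supplied by Lemma \ref{esth5} yields $\|\beta(\overline{\varphi}_{h}) - \beta(\overline{\varphi}_{\tau})\|_{L^2} \le L\|\overline{\varphi}_{h} - \overline{\varphi}_{\tau}\|_{L^2}$; and {\bf A3} handles the $\pi$-piece similarly. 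Then \eqref{tool7} and \eqref{tool5} reduce $\|\underline{\varphi}_{h} - \underline{\varphi}_{\tau}\|_{L^2}$ and $\|\overline{\varphi}_{h} - \overline{\varphi}_{\tau}\|_{L^2}$ to $\|\widehat{\varphi}_{h} - \widehat{\varphi}_{\tau}\|_{L^2}$ up to $O(h+\tau)$ remainders, so that all three squared $1\star$-terms are dominated by $C\int_{0}^{t} \|\widehat{\varphi}_{h} - \widehat{\varphi}_{\tau}\|_{L^2}^{2}\,ds + C(h+\tau)^{2}$.

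A companion estimate is obtained in exactly the same spirit from $\widehat{\varphi}_{h}(t) - \widehat{\varphi}_{\tau}(t) = (1\star(\overline{v}_{h} - \overline{v}_{\tau}))(t)$ together with \eqref{tool6}, namely $\|\widehat{\varphi}_{h}(t) - \widehat{\varphi}_{\tau}(t)\|_{L^2}^{2} \le C\int_{0}^{t} \|\widehat{v}_{h} - \widehat{v}_{\tau}\|_{L^2}^{2}\,ds + C(h+\tau)^{2}$. Adding this to the $\widehat{v}$-estimate, using the trivial bound $\|(1\star(\overline{u}_{h} - \overline{u}_{\tau}))(t)\|_{L^2}^{2} \le \|(1\star(\overline{u}_{h} - \overline{u}_{\tau}))(t)\|_{H^{1}(\Omega)}^{2}$, and collecting constants into a single $M_{2}$ delivers \eqref{C2}. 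The main obstacle I expect is purely bookkeeping: making sure each of the three $1\star$-remainders is controlled strictly by an integral $\int_{0}^{t}$ of a quantity already on the left-hand side of \eqref{C2}, never by a pointwise-in-$t$ norm, so that no non-small constant ever sneaks onto the right; the local Lipschitzianity of $\beta$ on the uniform $L^{\infty}$-ball from Lemma \ref{esth5} is exactly the ingredient that makes this possible without invoking monotonicity.
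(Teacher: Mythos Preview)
Your approach is essentially the paper's: integrate the difference of the second equation in \ref{Ph} over $(0,t)$, isolate $\widehat v_h-\widehat v_\tau$, estimate the $1\star$-remainders via {\bf A1}, the local Lipschitz continuity of $\beta$ on the uniform $L^\infty$-ball from Lemma~\ref{esth5}, {\bf A3}, and \eqref{tool5}--\eqref{tool7}, and pair this with the companion bound for $\widehat\varphi_h-\widehat\varphi_\tau$ obtained from $\widehat\varphi_h-\widehat\varphi_\tau = 1\star(\overline v_h-\overline v_\tau)$ and \eqref{tool6}. This is exactly the skeleton of the paper's \eqref{i1}--\eqref{i4} together with \eqref{i3} (your companion estimate is \eqref{i3}).

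One point deserves an explicit fix rather than the label ``bookkeeping''. After time-integration the identity contains the \emph{pointwise} term $\widehat\varphi_h(t)-\widehat\varphi_\tau(t)$ in addition to your three $1\star$-remainders; when you isolate $\widehat v_h(t)-\widehat v_\tau(t)$ and square, this produces $C\|\widehat\varphi_h(t)-\widehat\varphi_\tau(t)\|_{L^2}^2$ on the right with a constant $C$ that is not small. Simply ``adding'' the companion estimate then leaves that non-small multiple on the right and you cannot absorb it. You must instead \emph{substitute} the companion bound $\|\widehat\varphi_h(t)-\widehat\varphi_\tau(t)\|_{L^2}^2 \le C\!\int_0^t\|\widehat v_h-\widehat v_\tau\|_{L^2}^2\,ds + C(h+\tau)^2$ into the $\widehat v$-estimate first (this is precisely how the paper passes from \eqref{i3} to \eqref{i4}), and only then add the companion once more to put $\|\widehat\varphi_h(t)-\widehat\varphi_\tau(t)\|_{L^2}^2$ on the left of \eqref{C2}. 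With that correction your argument is complete and coincides with the paper's.
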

\begin{proof}
We see from \eqref{tool7} and Lemma \ref{esth1} that 
there exists a constant $C_{1} > 0$ such that 
\begin{align}\label{i1}
&\int_{0}^{t}
    \|\underline{\varphi}_{h}(s) - \underline{\varphi}_{\tau}(s)\|_{L^2(\Omega)}^2\,ds  
\notag \\[3mm] 
& \leq 3\int_{0}^{t}
                \|\overline{\varphi}_{h}(s) - \overline{\varphi}_{\tau}(s)\|_{L^2(\Omega)}^2\,ds   
          + 3h^2\int_{0}^{t}\|(\widehat{\varphi}_{h})_{s}(s) \|_{L^2(\Omega)}^2\,ds  
\notag \\ 
  &\,\quad + 3\tau^2\int_{0}^{t}\|(\widehat{\varphi}_{\tau})_{s}(s) \|_{L^2(\Omega)}^2\,ds 
\notag \\[2mm] 
&\leq 3\int_{0}^{t}
             \|\overline{\varphi}_{h}(s) - \overline{\varphi}_{\tau}(s) \|_{L^2(\Omega)}^2\,ds   
        + C_{1}h^2 + C_{1}\tau^2 
\end{align}
for all $h, \tau \in (0, h_{3})$ and all $t \in [0, T]$. 
Here, owing to \eqref{tool5} and Lemma \ref{esth5}, it holds that 
there exists a constant $C_{2} > 0$ such that  
\begin{align}\label{i2}
&3\int_{0}^{t}
        \|\overline{\varphi}_{h}(s) - \overline{\varphi}_{\tau}(s) \|_{L^2(\Omega)}^2\,ds   
\notag \\[3mm]
&= 3\int_{0}^{t}\|\overline{\varphi}_{h}(s) - \widehat{\varphi}_{h}(s) 
               + \widehat{\varphi}_{\tau}(s) - \overline{\varphi}_{\tau}(s) 
               + \widehat{\varphi}_{h}(s) - \widehat{\varphi}_{\tau}(s)\|_{L^2(\Omega)}^2\,ds 
\notag \\ 
&\leq 9\int_{0}^{t}
            \|\overline{\varphi}_{h}(s) - \widehat{\varphi}_{h}(s)\|_{L^2(\Omega)}^2\,ds   
       + 9\int_{0}^{t}
            \|\widehat{\varphi}_{\tau}(s) - \overline{\varphi}_{\tau}(s)\|_{L^2(\Omega)}^2\,ds 
\notag \\   
    &\,\quad 
        + 9\int_{0}^{t}
               \|\widehat{\varphi}_{h}(s) - \widehat{\varphi}_{\tau}(s)\|_{L^2(\Omega)}^2\,ds  
\notag \\ 
&\leq C_{2}h^2 + C_{2}\tau^2  
+ 9\int_{0}^{t}
       \|\widehat{\varphi}_{h}(s) - \widehat{\varphi}_{\tau}(s)\|_{L^2(\Omega)}^2\,ds  
\end{align}
for all $h, \tau \in (0, h_{3})$ and all $t \in [0, T]$. 
We derive from the identity $\overline{v}_{h}(s) = (\widehat{\varphi}_{h})_{s}(s)$, 
\eqref{tool6} and Lemma \ref{esth8} that 
there exists a constant $C_{3} > 0$ such that 
\begin{align}\label{i3}
&\|\widehat{\varphi}_{h}(t) - \widehat{\varphi}_{\tau}(t)\|_{L^2(\Omega)}^2  
\notag \\[3mm] 
&= \left\|
        \int_{0}^{t} (\overline{v}_{h}(s) - \overline{v}_{\tau}(s))\,ds
     \right\|_{L^2(\Omega)}^2
\notag \\ 
&= \left\|
        \int_{0}^{t} (\overline{v}_{h}(s) - \widehat{v}_{h}(s) 
                        + \widehat{v}_{\tau}(s) - \overline{v}_{\tau}(s)
                        + \widehat{v}_{h}(s) - \widehat{v}_{\tau}(s))\,ds
     \right\|_{L^2(\Omega)}^2  
\notag \\ 
&\leq C_{3}h^2 + C_{3}\tau^2 
         + C_{3}\int_{0}^{t}\|\widehat{v}_{h}(s) - \widehat{v}_{\tau}(s)\|_{L^2(\Omega)}^2\,ds 
\end{align}
for all $h, \tau \in (0, h_{3})$ and all $t \in [0, T]$. 
Thus, since 
\begin{align*}
&\widehat{v}_{h} - \widehat{v}_{\tau} 
+ \widehat{\varphi}_{h} - \widehat{\varphi}_{\tau} 
+ a(\cdot)(1\star(\underline{\varphi}_{h} - \underline{\varphi}_{\tau})) 
- J\ast(1\star(\underline{\varphi}_{h} - \underline{\varphi}_{\tau})) 
\notag \\ 
&+ 1\star(\beta(\overline{\varphi}_{h}) - \beta(\overline{\varphi}_{\tau})) 
  + 1\star(\pi(\overline{\varphi}_{h}) - \pi(\overline{\varphi}_{\tau})) 
= 1\star(\overline{u}_{h} - \overline{u}_{\tau}), 
\end{align*}
we deduce from {\bf A1}, Lemma \ref{esth5}, 
the local Lipschitz continuity of $\beta$, {\bf A3}, 
\eqref{i1}-\eqref{i3} that 
there exists a constant $C_{4} > 0$ such that 
\begin{align}\label{i4}
&\|\widehat{v}_{h}(t) - \widehat{v}_{\tau}(t)\|_{L^2(\Omega)}^2 
\notag \\[4mm]
&\leq C_{4}(h^2 + \tau^2) 
    + C_{4}\int_{0}^{t}
                \|\widehat{\varphi}_{h}(s) - \widehat{\varphi}_{\tau}(s)\|_{L^2(\Omega)}^2\,ds 
\notag \\   
&\,\quad + C_{4}\int_{0}^{t}
                        \|\widehat{v}_{h}(s) - \widehat{v}_{\tau}(s)\|_{L^2(\Omega)}^2\,ds 
    + C_{4}\|(1\star(\overline{u}_{h} - \overline{u}_{\tau}))(t)\|_{H^1(\Omega)}^2 
\end{align}
for all $h, \tau \in (0, h_{3})$ and all $t \in [0, T]$. 
On the other hand, 
it follows from the identity $\overline{v}_{h}(s) = (\widehat{\varphi}_{h})_{s}(s)$, 
the Schwarz inequality, the Young inequality, 
\eqref{tool6}, 
Lemmas \ref{esth1} and \ref{esth8} 
that there exists a constant $C_{5} > 0$ such that 
\begin{align}\label{i5}
&\frac{1}{2}\|\widehat{\varphi}_{h}(t) - \widehat{\varphi}_{\tau}(t)\|_{L^2(\Omega)}^2 
\notag \\[4mm]
&= \int_{0}^{t}(\overline{v}_{h}(s) - \overline{v}_{\tau}(s), 
                  \widehat{\varphi}_{h}(s) - \widehat{\varphi}_{\tau}(s))_{L^2(\Omega)}\,ds 
\notag \\ 
&= \int_{0}^{t}(\overline{v}_{h}(s) - \widehat{v}_{h}(s), 
                  \widehat{\varphi}_{h}(s) - \widehat{\varphi}_{\tau}(s))_{L^2(\Omega)}\,ds  
\notag \\ 
&\,\quad+ \int_{0}^{t}(\widehat{v}_{\tau}(s) - \overline{v}_{\tau}(s), 
                  \widehat{\varphi}_{h}(s) - \widehat{\varphi}_{\tau}(s))_{L^2(\Omega)}\,ds  
\notag \\ 
&\,\quad + \int_{0}^{t}(\widehat{v}_{h}(s) - \widehat{v}_{\tau}(s), 
                     \widehat{\varphi}_{h}(s) - \widehat{\varphi}_{\tau}(s))_{L^2(\Omega)}\,ds 
\notag \\[4mm] 
&\leq C_{5}h + C_{5}\tau 
   + \frac{1}{2}\int_{0}^{t}\|\widehat{v}_{h}(s) - \widehat{v}_{\tau}(s)\|_{L^2(\Omega)}^2\,ds  
   + \frac{1}{2}
           \int_{0}^{t}
               \|\widehat{\varphi}_{h}(s) - \widehat{\varphi}_{\tau}(s)\|_{L^2(\Omega)}^2\,ds                    
\end{align}
for all $h, \tau \in (0, h_{3})$ and all $t \in [0, T]$. 
Therefore we can show Lemma \ref{LemC2} by \eqref{i4} and \eqref{i5}. 
\end{proof}

\begin{lem}\label{Cauchy}
Let $h_{3}$ be as in Lemma \ref{esth5}. 
Then there exists a constant  $M > 0$ depending on the data such that 
\begin{align*}
&\|1\star(\overline{u}_{h} - \overline{u}_{\tau})\|_{C([0, T]; H^1(\Omega))}
   + \|\widehat{\varphi}_{h} - \widehat{\varphi}_{\tau}\|_{C([0, T]; L^2(\Omega))}
  + \|\widehat{v}_{h} - \widehat{v}_{\tau}\|_{C([0, T]; L^2(\Omega))}
\notag \\[2mm]
&\leq M(h^{1/2} + \tau^{1/2}) 
        + M\|\overline{f}_{h} - \overline{f}_{\tau}\|_{L^2(0, T; L^2(\Omega))} 
           + M\|\overline{g}_{h} - \overline{g}_{\tau}\|_{L^2(0, T; L^2(\partial\Omega))} 
\end{align*}
for all $h, \tau \in (0, h_{3})$. 
\end{lem}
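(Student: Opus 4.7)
The plan is to fold Lemmas \ref{LemC1} and \ref{LemC2} into a single Gronwall-type integral inequality for the combined quantity
\[
E(t) := \|(1\star(\overline{u}_h-\overline{u}_\tau))(t)\|_{H^1(\Omega)}^2
      + \|\widehat{\varphi}_h(t)-\widehat{\varphi}_\tau(t)\|_{L^2(\Omega)}^2
      + \|\widehat{v}_h(t)-\widehat{v}_\tau(t)\|_{L^2(\Omega)}^2,
\]
and then to extract square roots. A naive sum of \eqref{C1} and \eqref{C2} does not close, because the right-hand side of \eqref{C2} contains the pointwise-in-$t$ term $M_2\|(1\star(\overline{u}_h-\overline{u}_\tau))(t)\|_{H^1(\Omega)}^2$, which coincides with (part of) the left-hand side of \eqref{C1} and cannot be absorbed by the integral-in-time terms. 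This is the only genuinely delicate point in the proof.

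To handle it, I would multiply \eqref{C1} by a fixed weight $\kappa>M_2$ (for instance $\kappa=2M_2$) and add the resulting inequality to \eqref{C2}. The bad term $M_2\|(1\star(\overline{u}_h-\overline{u}_\tau))(t)\|_{H^1(\Omega)}^2$ on the right-hand side is then dominated by the contribution $\kappa\|(1\star(\overline{u}_h-\overline{u}_\tau))(t)\|_{H^1(\Omega)}^2$ on the left, leaving a strictly positive coefficient $\kappa-M_2>0$ in front of the $\|(1\star\cdot)(t)\|_{H^1(\Omega)}^2$ term on the left. After dividing by $\min\{\kappa-M_2,\,1\}$, I obtain, for all $h,\tau\in(0,h_3)$ and all $t\in[0,T]$,
\[
E(t) \leq C_{\ast}(h+\tau)
      + C_{\ast}\!\int_{0}^{t}\! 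E(s)\,ds
      + C_{\ast}\|\overline{f}_h-\overline{f}_\tau\|_{L^2(0,T;L^2(\Omega))}^2
      + C_{\ast}\|\overline{g}_h-\overline{g}_\tau\|_{L^2(0,T;L^2(\partial\Omega))}^2,
\]
where $C_{\ast}>0$ depends only on $M_1$, $M_2$ and $\kappa$.

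Applying the continuous Gronwall lemma then yields the uniform bound
\[
\sup_{t\in[0,T]} E(t)
\leq C_{\ast}e^{C_{\ast}T}\bigl((h+\tau) + \|\overline{f}_h-\overline{f}_\tau\|_{L^2(0,T;L^2(\Omega))}^2 + \|\overline{g}_h-\overline{g}_\tau\|_{L^2(0,T;L^2(\partial\Omega))}^2\bigr).
\]
Each of $\|1\star(\overline{u}_h-\overline{u}_\tau)\|_{C([0,T];H^1(\Omega))}^2$, $\|\widehat{\varphi}_h-\widehat{\varphi}_\tau\|_{C([0,T];L^2(\Omega))}^2$ and $\|\widehat{v}_h-\widehat{v}_\tau\|_{C([0,T];L^2(\Omega))}^2$ is bounded from above by $\sup_{t\in[0,T]}E(t)$, so the conclusion of Lemma \ref{Cauchy} follows by taking square roots and using the subadditivity $\sqrt{a+b+c+d}\leq\sqrt{a}+\sqrt{b}+\sqrt{c}+\sqrt{d}$, with $M$ chosen as an appropriate multiple of $\sqrt{C_{\ast}e^{C_{\ast}T}}$. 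No further estimates beyond the two preceding lemmas are needed; the whole argument is a bookkeeping step followed by a standard Gronwall application.
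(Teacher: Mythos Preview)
Your proof is correct and follows essentially the same approach as the paper: combine \eqref{C1} and \eqref{C2} with suitable weights so that the pointwise term $M_{2}\|(1\star(\overline{u}_{h}-\overline{u}_{\tau}))(t)\|_{H^1(\Omega)}^2$ is absorbed on the left, then apply the Gronwall lemma. The only cosmetic difference is that the paper scales \eqref{C2} by $\tfrac{1}{2M_{2}}$ rather than scaling \eqref{C1} by $2M_{2}$, which is of course equivalent.
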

\begin{proof}
Combining \eqref{C1} and \eqref{C2} leads to the inequality 
\begin{align*}
&\frac{1}{2}\|(1\star(\overline{u}_{h} - \overline{u}_{\tau}))(t)\|_{H^1(\Omega)}^2 
+ \frac{1}{2M_{2}}
       \|\widehat{\varphi}_{h}(t) - \widehat{\varphi}_{\tau}(t)\|_{L^2(\Omega)}^2 
+ \frac{1}{2M_{2}}
       \|\widehat{v}_{h}(t) - \widehat{v}_{\tau}(t)\|_{L^2(\Omega)}^2  
\notag \\[5mm] 
&\leq \left(M_{1} + \frac{1}{2}\right)(h + \tau) 
         + M_{1}\int_{0}^{t} 
                 \|(1\star(\overline{u}_{h} - \overline{u}_{\tau}))(s)\|_{H^1(\Omega)}^2\,ds 
\notag \\ 
&\,\quad + \frac{1}{2}\int_{0}^{t} 
                 \|\widehat{\varphi}_{h}(s) - \widehat{\varphi}_{\tau}(s)\|_{L^2(\Omega)}^2\,ds 
+ \left(M_{1} + \frac{1}{2}\right)
                   \int_{0}^{t} \|\widehat{v}_{h}(s) - \widehat{v}_{\tau}(s)\|_{L^2(\Omega)}^2\,ds
\notag \\
&\,\quad + M_{1}\|\overline{f}_{h} - \overline{f}_{\tau}\|_{L^2(0, T; L^2(\Omega))}^2 
           + M_{1}\|\overline{g}_{h} - \overline{g}_{\tau}\|_{L^2(0, T; L^2(\partial\Omega))}^2.  
\end{align*}
Thus by the Gronwall lemma we can obtain Lemma \ref{Cauchy}. 
\end{proof}

\medskip

\begin{prth1.1}
We have from Lemmas \ref{esth1}-\ref{esth3}, \ref{esth5}-\ref{esth9}, \ref{Cauchy}, 
the Aubin--Lions lemma 
for the compact embedding $L^2(\Omega) \hookrightarrow {(H^1(\Omega))}^{*}$, 
the properties \eqref{tool4}-\eqref{tool7} that 
there exist some functions $u$, $\theta$, $\varphi$, $\xi$ such that 
\begin{align*}
    &u \in L^2(0, T; H^1(\Omega)),\   
      \theta \in H^1(0, T; {(H^1(\Omega))}^{*}) \cap L^{\infty}(0, T; L^2(\Omega)),  
    \\[1mm]       
    &\varphi \in W^{2, 2}(0, T; L^2(\Omega)) \cap W^{1, \infty}(0, T; L^{\infty}(\Omega)),\ 
    \xi \in L^{\infty}(\Omega\times(0, T))   
    \end{align*}
and 
\begin{align}
&\widehat{\theta}_{h} \to \theta 
\quad \mbox{weakly$^*$ in}\ H^1(0, T; {(H^1(\Omega))}^*) 
                                                                 \cap L^{\infty}(0, T; L^2(\Omega)), 
\label{weakh1} \\[1.5mm] 
&\widehat{\theta}_{h} \to \theta 
\quad \mbox{strongly in}\ C([0, T]; {(H^1(\Omega))}^*),  
\label{strongh1} \\[1.5mm] 
&\alpha(\overline{u}_{h}) = \overline{\theta}_{h} \to \theta 
\quad \mbox{weakly$^*$ in}\ L^{\infty}(0, T; L^2(\Omega)), 
\label{weakh2} \\[1.5mm] 
&\overline{u}_{h} \to u 
\quad \mbox{weakly in}\ L^2(0, T; H^1(\Omega)), 
\label{weakh3} \\[1.5mm] 
&\overline{z}_{h} \to \varphi_{tt} 
\quad \mbox{weakly in}\ L^2(0, T; L^2(\Omega)),  
\label{weakh4} \\[1.5mm] 
&\widehat{v}_{h} \to \varphi_{t} 
\quad \mbox{strongly in}\ C([0, T]; L^2(\Omega)),   
\label{strongh2} \\[1.5mm] 
&\overline{v}_{h} \to \varphi_{t} 
\quad \mbox{weakly$^*$ in}\  L^{\infty}(\Omega\times(0, T)),   
\label{weakh5} \\[1.5mm] 
&\widehat{\varphi}_{h} \to \varphi 
\quad \mbox{weakly$^*$ in}\ W^{1, \infty}(0, T; L^{\infty}(\Omega)), 
\label{weakh6} \\[1.5mm]  
&\widehat{\varphi}_{h} \to \varphi 
\quad \mbox{strongly in}\ C([0, T]; L^2(\Omega)), 
\label{strongh3} \\[1.5mm]  
&\overline{\varphi}_{h} \to \varphi 
\quad \mbox{weakly$^*$ in}\ L^{\infty}(\Omega\times(0, T)),   
\notag \\[1.5mm] 
&\underline{\varphi}_{h} \to \varphi 
\quad \mbox{weakly$^*$ in}\ L^{\infty}(\Omega\times(0, T)),   
\label{weakh7} \\[1.5mm]
&\beta(\overline{\varphi}_{h}) \to \xi 
\quad \mbox{weakly$^*$ in}\ L^{\infty}(\Omega\times(0, T))  
\label{weakh8}
\end{align}
as $h = h_{j} \searrow 0$, 
where $\alpha(r) := - \frac{1}{r}$ 
for $r \in D(\alpha) := \{ r \in \mathbb{R} \ |\  r < 0 \}$. 
We see from \eqref{tool4}, Lemmas \ref{esth1}, \ref{esth3}, 
\eqref{strongh1}, and \eqref{weakh3} 
that 
\begin{align*}
&\int_{0}^{T} (\alpha(\overline{u}_{h}(t)), \overline{u}_{h}(t))_{L^2(\Omega)}\,dt 
\notag \\ 
&= \int_{0}^{T} (\overline{\theta}_{h}(t), \overline{u}_{h}(t))_{L^2(\Omega)}\,dt 
\notag \\ 
&= \int_{0}^{T} \langle \overline{\theta}_{h}(t) - \widehat{\theta}_{h}(t), 
                            \overline{u}_{h}(t) \rangle_{{(H^1(\Omega))}^*, H^1(\Omega)}\,dt 
     + \int_{0}^{T} \langle \widehat{\theta}_{h}(t), 
                            \overline{u}_{h}(t) \rangle_{{(H^1(\Omega))}^*, H^1(\Omega)}\,dt  
\notag \\ 
&\to \int_{0}^{T} \langle \theta(t), u(t) \rangle_{{(H^1(\Omega))}^*, H^1(\Omega)}\,dt   
= \int_{0}^{T} (\theta(t), u(t))_{L^2(\Omega)}\,dt  
\end{align*}
as $h = h_{j} \searrow 0$. 
Thus, noting that $\alpha : D(\alpha) \subset \mathbb{R} \to \mathbb{R}$ 
is maximal monotone, 
we can obtain that 
\begin{equation}\label{j1}
\theta = \alpha(u) = - \frac{1}{u} \quad \mbox{a.e.\ in}\ \Omega\times (0, T)
\end{equation}
(see, e.g., \cite[Lemma 1.3, p.\ 42]{Barbu1}). 
On the other hand, 
it follows from \eqref{tool5}, Lemma \ref{esth5} and \eqref{strongh3} that 
\begin{align}\label{j2}
\|\overline{\varphi}_{h} - \varphi\|_{L^{\infty}(0, T; L^2(\Omega))} 
&\leq \|\overline{\varphi}_{h} - \widehat{\varphi}_{h}\|_{L^{\infty}(0, T; L^2(\Omega))} 
       + \|\widehat{\varphi}_{h} - \varphi\|_{L^{\infty}(0, T; L^2(\Omega))} 
\notag \\ 
&\leq |\Omega|^{1/2} h \|\overline{v}_{h}\|_{L^{\infty}(\Omega\times(0, T))} 
         + \|\widehat{\varphi}_{h} - \varphi\|_{C([0, T]; L^2(\Omega))} 
\notag \\ 
&\to 0
\end{align}
as $h = h_{j} \searrow 0$. 
Then combining \eqref{weakh8} and \eqref{j2} yields that 
\begin{align*}
\int_{0}^{T}(\beta(\overline{\varphi}_{h}(t)), \overline{\varphi}_{h}(t))_{L^2(\Omega)}\,dt 
\to \int_{0}^{T} (\xi(t), \varphi(t))_{L^2(\Omega)}\,dt
\end{align*} 
as $h=h_{j}\searrow0$, and hence it holds that  
\begin{align}\label{j3}
\xi = \beta(\varphi) \quad \mbox{a.e.\ in}\ \Omega\times(0, T).  
\end{align}  
Therefore by virtue of \eqref{weakh1}, \eqref{strongh1}, 
\eqref{weakh3}-\eqref{j3}, {\bf A1}, {\bf A3} 
and by observing that 
$\overline{f}_{h} \to f$ strongly in $L^2(0, T; L^2(\Omega))$ 
and $\overline{g}_{h} \to g$ strongly in $L^2(0, T; L^2(\partial\Omega))$ 
as $h \searrow 0$ (see e.g., \cite[Section 5]{CK1}),  
we can derive existence of weak solutions to \eqref{P}. 
Moreover, we can show uniqueness of weak solutions to \eqref{P} 
in a similar way to the proofs of Lemmas \ref{LemC1}, \ref{LemC2} and \ref{Cauchy}. 
\qed
\end{prth1.1}

\begin{prth2.2}
Since we have from 
$f \in L^2(0, T; L^2(\Omega)) \cap W^{1,1}(0, T; L^2(\Omega))$ 
and $g \in L^2(0, T; L^2(\partial\Omega)) \cap W^{1,1}(0, T; L^2(\partial\Omega))$
that there exists a constant $C_{1} >0$ such that 
\[
\|\overline{f}_{h} - f\|_{L^2(0, T; L^2(\Omega))} \leq C_{1}h^{1/2}
\]
and 
\[
\|\overline{g}_{h} - g\|_{L^2(0, T; L^2(\partial\Omega))} \leq C_{1}h^{1/2}
\]
for all $h > 0$ (see e.g., \cite[Section 5]{CK1}), 
we can prove Theorem \ref{maintheorem3} by Lemma \ref{Cauchy}.  
\qed
\end{prth2.2}



%
%
%

\end{document}